\numberwithin{equation}{section}
\newtheorem{theorem}{Theorem}[section]
\newtheorem{definition}[theorem]{Definition}
\newtheorem{proposition}[theorem]{Proposition}
\newtheorem{corollary}[theorem]{Corollary}
\newtheorem{lemma}[theorem]{Lemma}
\newtheorem{examples}[theorem]{Examples}
\newtheorem{remark}[theorem]{Remark}
\newtheorem{assumption}[theorem]{Assumption}
\newcommand{\nn}{\nonumber}
\DeclareMathOperator*{\argmin}{arg\,min}
\DeclareMathOperator*{\argmax}{arg\,max}
\newcommand{\dd}{\mathrm{d}}
\DeclareMathOperator*{\ncond}{\textnormal{\textsc{Ncond}}}
\newcommand\suite[1]{\left\{#1\right\}_{t\in\N_0}}
\newcommand\rhomin{r^*}
\newcommand\pepe{\qquad\quad}
\newcommand{\N}{\mathbb N}
\newcommand{\E}{\mathbf{E}}
\newcommand{\G}{\mathbf{G}}
\newcommand{\M}{\mathbf{M}}
\newcommand{\U}{\mathbf{U}}
\newcommand{\V}{\mathbf{V}}
\newcommand{\W}{\mathbf{W}}
\newcommand{\II}{\mathbb{I}_\rho}
\newcommand{\AAA}{\mathcal{A}}
\newcommand{\CCC}{\mathcal{C}}
\newcommand{\EEE}{\mathcal{E}_\rho}
\newcommand{\GGG}{\mathcal{G}_\rho}
\newcommand{\III}{\mathcal{I}}
\newcommand{\SSS}{\mathcal{S}}
\newcommand{\NNN}{\mathcal{N}_\rho}
\newcommand{\HHH}{\mathcal{H}}
\newcommand{\BBB}{\mathcal{B}}
\newcommand{\XX}{\mathscr{X}}
\newcommand{\FF}{\mathscr{F}}
\title{Online matching for the multiclass stochastic block model}
\author{Nahuel Soprano-Loto$^1$ \and  Matthieu Jonckheere$^1$
	\and Pascal Moyal$^2$}
\date{
	$^1$LAAS-CNRS \\ \texttt{\{nahuel.soprano-loto, matthieu.jonckheere\}@laas.fr}\\%
	$^2$IECL/INRIA PASTA \\ \texttt{pascal.moyal@univ-lorraine.fr}\\[2ex]%
}
\begin{document}
	\sloppy	
	
	\maketitle
	
	\begin{abstract}
		\noindent
		We consider the problem of sequential matching in a stochastic block model with several classes of nodes and generic compatibility constraints. When the probabilities of connections do not scale with the size of the graph, we show that under the NCOND condition, a simple max-weight type policy allows to attain an asymptotically perfect matching while no sequential algorithm attain perfect matching otherwise.  
		The proof relies on a specific Markovian representation of the dynamics associated with Lyapunov techniques.
		\\
		
		\noindent\textbf{Keywords: online matching, stochastic block model, perfect matching} 
	\end{abstract}
	

	\section{Introduction}

	A matching in a graph is a subset of edges where no two edges share a common vertex. Finding algorithms to determine matchings of maximum cardinality is an important problem in computer science and combinatorial optimization (see \cite{schrijver2003} and references therein). 
	
	Recently, online matching algorithms have gained significant attention because of their wide range of applications in fields such as job markets, advertisements, dates, and transportation businesses. Unlike traditional matching algorithms, which operate on a fixed input graph, online matching algorithms must handle sequentially arriving nodes and return a matching on-the-fly at each step. This added complexity presents a significant challenge for algorithm design, since the algorithm must make decisions with incomplete information and a limited view of the overall problem.

	

	
	In their seminal paper, Karp, Vazirani, and Vazirani \cite{karp1990} investigate randomized online matching algorithms for certain classes of non-random graphs. Specifically, they focus on graphs with one-sided bipartite vertex arrivals.
	The authors use the competitive ratio between the worst online and best offline scenarios as the performance metric for evaluating a policy,
	where an offline algorithm is one that can wait until all requests are received before determining its responses.
	The authors establish that a specific randomization strategy can achieve an asymptotically optimal value of order $n(1-e^{-1})$ for the number of online matchings in a bipartite graph of size $n$.

	More recently, several researchers (see \cite{Kerimov2021DynamicMC} for instance) have focused on online algorithms without knowledge of arrival parameters, showing that structured policies, 
	close to asymptotically optimal (built at a fluid scale) behave 
	well. This is one of the motivation for the present work.
	
	
	On the other end of the spectrum, there has been an important effort of research on characterizing the so-called matching number (the cardinality of a maximum matching) of random graphs. 
	Matching numbers on sparse configuration models are studied in detail in \cite{bordenave2013}, generalizing the well known results of Karp and Sipzer (see \cite{karp81} and \cite{Aronson1998MaximumMI}).
	Randomized online matching algorithms and their properties in terms of competitive ratio have been studied in \cite{cohen2018} for regular graphs, and very recently for more general sparse graphs in \cite{pascal} and \cite{noiry2021online}.

	To the best of our knowledge, there are no results on multi-class matching on random graphs, neither from the perspective of matching number characterization and 
	from characterizing the performance of online algorithms, though it is of course a crucial model for applications.

	In this study, we address an online matching problem within a stochastic block model consisting of $n$ nodes subject to per-class compatibility constraints.
	More precisely, a sequence of random graphs is constructed on-the-fly in the sense that nodes arrive one-by-one, and their adjacencies are determined upon arrival.
	This can be seen as an exploration of the large graph of size $ n $.
	A random class is assigned to every node according to a fixed distribution over a (finite) set of classes,
	and each node of class $i$ connects with a node of class $j$ with probability $0 \le \rho(i,j) \le 1$ (that do not depend on $n$).
	The matchings are online in the sense that at most one edge  can be added to the matching per time unit.
	
	To tackle this problem, we introduce a family of matching policies based on simple monotonic rules, which extend the well-known max-weight policies. 
	We then use a joint construction of the stochastic block model and the matching process to reach a Markovian representation of the
	number of unmatched nodes.
	This representation combined with
	the specification of matching policies allows us to use 
	Lyapunov techniques by establishing a series of drift inequalities. To achieve this, we build on the techniques introduced in \cite{SS2022}.
	This is in turn allows us to prove stability properties for the underlying Markov process.
	
	This yields important results on the asymptotic optimality of online matching algorithms under specific matching policies. We show that, when a certain condition (known as NCOND in the literature \cite{MM2016}) on the probabilities  $\rho(i,j)$'s is satisfied, the policies that we study are able to achieve asymptotically optimal $O(n)$ matchings.
	Conversely, we prove that no algorithm can stabilize the process formed by unmatched nodes if the $ \ncond $ condition is not met,
	providing in this way a characterization of the maximum stability region.
	These findings reveal a phase transition in the difficulty of the matching problem in a multi-class setting, and offer valuable insights into the fundamental limits of online matching competitive ratios in such contexts. While our study does not address the case of sparse random graphs, this remains an open question for future research.

	\paragraph{Paper structure.}
	Section \ref{sec:preliminaries} is dedicated to presenting the necessary background information prior to the main results.
	It is divided in three subsections.
	In the first one, the construction of the online stochastic block model is given, together with the online matching algorithm.
	In the second subsection, we introduce the condition $ \ncond $ and state the first result that says that we cannot expect to have an asymptotically optimal online matching policy under the lack of this condition.
	In the third subsection, we present the max-weight type policy that concerns us, and give the Markovian representation of the process of unmatched nodes.
	
	Section \ref{sec:main-result} is dedicated to presenting the main results of the article.
	In Subsection \ref{sec:fundamental-drift}, Theorem \ref{env:theorem-main} is highlighted as the principal result, providing a key drift inequality for a simple quadratic function. The remaining portion of the section is devoted to exploring the corollaries that follow from this theorem. Subsection \ref{sec:stability-and-tightness} presents a result concerning stability and tightness, while Subsection \ref{sec:perfect-matching-io} deals with the possibility of a perfect matching occurring infinitely often. Subsection \ref{sec:ergodicity} focuses on ergodicity and asymptotic optimality, while Subsection \ref{sec:long-run-behaviour} discusses long-term behavior.

	In Section \ref{sec:proof-strategy}, the focus is on presenting the proofs, while the proofs of certain technical results needed for deriving the primary drift inequality are deferred to the Appendix \ref{sctn:proofs}.

	\section{Preliminaries}\label{sec:preliminaries}

	\subsection{Online matching on the stochastic block model}\label{sec:model_definition}

	In the online version of the stochastic block model,
	nodes arrive one-by-one, and the graph is constructed on the fly, generating an increasing sequence of random graphs $\suite{\G_t}=\suite{(\V_t,\E_t)}$.
	Its construction requires the following ingredients: a finite set of classes $\CCC$, a probability $\nu$ defined on $\CCC$, and a symmetric matrix $\rho=(\rho(i,j))_{i,j\in \CCC}\in[0,1]^{\CCC\times \CCC}$.
	Without loss of generality, we assume that $\nu$ is a positive probability, i.e. $\nu(i)>0$ for every $i\in \CCC$. If this were not the case, we could simply restrict to the support of $ \nu $, i.e. to the subset $ \{i\in\CCC:\nu(i)>0\} $. 
	
	The sequence of graphs is defined inductively, together with a sequence of associated matchings.
	Precisely, let
	$\G_0=(\V_0,\E_0)=(\emptyset,\emptyset)$
	and $\M_0=\emptyset$.
	Also, for any $i\in \CCC$, 
	we introduce the auxiliary elements
	$\V^i_0=\emptyset$ and $\U^i_0=\emptyset$. 
	We interpret, at time $ t $,
	the set $\M_t$ as the matching in the graph $ \G_t $ ($ \M_t $ is a subset of $ \E_t $ containing only edges which do not share endpoints),
	the set $\V^i_{t}$ as the set of all nodes of $\V_{t}$ having class $i$,
	and the set $\U^i_{t}$ as the subset of unmatched nodes of class $i$.
	By unmatched node we mean that the node is not the endpoint of any edge in the matching.
	The class of a node $ v\in \V_t $ will be denoted by $ c(v)\in\CCC $, 
	so $ v\in\V_t^{c(v)} $.
	We will say in this case that $ v $ is a \textit{$ c(v) $-node}.
	Also, for a class subset
	$ \AAA\subset \CCC $,
	an $ \AAA $-node is a node $ v $ such that $ c(v)\in\AAA $.
	
	Suppose that we have defined $\G_{t-1}=(\V_{t-1},\E_{t-1})$, $\M_{t-1}\subset \E_{t-1}$,
	$\{\V_{t-1}^i\}_{i\in \CCC}$ and $\{\U_{t-1}^i\}_{i\in \CCC}$.
	Then the corresponding objects at time $t$ are defined as follows:
	\begin{enumerate}
		\item A node $v_t$ arrives, and we set 
		\begin{align}
		\V_{t}=\V_{t-1}\cup \{v_t\}.    
		\end{align}
		We draw the class $c(v_t)\in \CCC$ of node $v_t$  
		from the distribution $\nu$ on $\CCC$, independently of everything else, and we set
		$\V_t^{c(v_t)}=\V_{t-1}^{c(v_t)}\cup\{v_t\}$,
		and $\V_t^{i}=\V_{t-1}^{i}$ for $ i\in \CCC\setminus\{c(v_t)\} $.
		Next we associate to every node $v\in \V_{t-1}$ an (independent of everything else) Bernoulli random variable $\psi_{t,v}$ of parameter $\rho(c(v_t),c(v))$,
		i.e. $\mathbb P(\psi_{t,v}=1)=1-\mathbb P(\psi_{t,v}=0)=\rho(c(v_t),c(v))$,
		and set
		\begin{align}
		\E_t=\E_{t-1}\cup \{\{v_t,v\}:v\in \V_{t-1}\text{ such that }\psi_{t,v}=1\}.
		\end{align}
		
		\item 
		Let now $ \M_t $ be a matching in $ \G_t $  with the following two constraints: $ \M_{t-1}\subset \M_t $ and $ |\M_t\setminus\M_{t-1}| \allowbreak \le 1 $.
		This means that $ \M_t $ is obtained from $ \M_{t-1} $ either by keeping it as it was or by adding one edge in such a way that the resulting family of edges are mutually disjoint.
		Beyond these restrictions, the choice of $ \M_t $ is absolutely arbitrary.
		Nevertheless, in order to have something concrete in mind, the reader can think that $\M_t$ depends on the current and past states of the system, and on an extra independent source of randomness.
		More precisely, we may think of $ \M_t $ as a function depending on
		$ \{(\G_s,\{\V^i_s\}_{i\in\CCC},\M_s)\}_{s\in\{0,\ldots,t-1\}} $, on the class of the incoming node $ c(v_t) $,
		on the Bernoulli random variables $ \{\psi_{t,v}\}_{v\in\V_{t-1}} $, and possibly on an external ---independent of everything else--- source of randomness.
		We are in the following alternative:
		\begin{itemize}
			\item if $\M_t=\M_{t-1}$, set $\U_t^{c(v_t)}=\U_{t-1}^{c(v_t)}\cup\{v_t\}$,
			and
			$\U_{t}^i=\U_{t-1}^i$ for $i\neq c(v_t)$;
			\item if $ \M_t\setminus\M_{t-1} $ has one element, say $\{v,w\}$,
			set 
			$\U_t^{c(v_t)}=(\U_{t-1}^{c(v_t)}\cup\{v_t\})\setminus \{v,w\}$, and $\U^i_t=\U_{t-1}^i\setminus \{v,w\}$ for $i\neq c(v_t)$.
		\end{itemize}
		In the second case in which the edge $ \{v,w\} $ is added to the matching, we say that the nodes $ v $ and $ w $ are \textit{matched}.
		In addition to describing the disjoint set of edges, the word ``matching'' will also be used to refer to a pair of nodes that have been matched.
		Under this terminology, we can say that a matching consists in a family of matchings.
		We hope that this undesirable terminology will not lead to confusion, with the context helping to distinguish the different uses of the word.
	\end{enumerate}

\paragraph{A note on the classical stochastic block model.}
The stochastic block model is a widely-used random graph model for studying clustering and community detection. It can be easily defined as follows: a fixed, finite set of nodes $V$ is divided into $r$ communities $V^1, \ldots, V^r$. For each pair of distinct nodes, a link is created between them in a random fashion, independently of everything else, with a probability that depends only on the communities to which the nodes belong. In the construction of the online stochastic block model, conditioned to having communities $\{\V^i_t\}_{i \in \CCC}$ at time $t$, the distribution of the set of nodes $\E_t$ corresponds to that of the stochastic block model with these communities, and the connection probabilities are given by the matrix $\rho$.

	\subsection{The condition NCOND}\label{sec:ncond}

	The connection probabilities $ (\rho(i,j))_{i,j\in \CCC} $
	define an adjacency matrix $ (\EEE(i,j))_{i,j\in \CCC} $ in the following natural way:
	$\EEE(i,j)=\mathbbm 1\{\rho(i,j)>0\} $ for every $ i,j\in\CCC $.
	If two classes $ i,j\in \CCC $ are such that $ \EEE(i,j)=1 $, we say that they are \textit{compatible}.
	Call $ \GGG=(\CCC,\EEE) $ the graph defined by this adjacency matrix,
	graph which we will refer to as the {\em root graph}.
	Observe that this graph is undirected, in the sense that $ i $ is compatible with $ j $ if and only if $ j $ is compatible with $ i $, and that it admits self-loops, in the sense that there may exist classes that are compatible with themselves.
	A class-subset $ \III\subset \CCC $ is said to be a $\GGG$-independent set if $ i,j\in \III $ implies $ \EEE(i,j)=0 $.
	According to this definition, the empty set is a $ \GGG $-independent set.
	Observe also that if $ \III $ is a $ \GGG $-independent set, then
	the classes lying in $ \III $ cannot have self-loops.
	It is convenient to define the set
	\begin{align}
	\II=\{\III\subset\CCC:\mbox{$ \III $ is a non-empty $ \GGG $-independent set}\}.
	\end{align}
	Also for a class-subset $ \AAA\subset \CCC $, let $\NNN(\AAA)=\{j\in \CCC: \EEE(i,j)=1\text{ for some }i\in \AAA\} $ denote the set containing the classes that are compatible with some class belonging to $ \AAA $.
	For $ i\in \CCC $, we abuse notation by writing $ \NNN(i) $ instead of $ \NNN(\{i\}) $.
	Let 
	\begin{align}
	\eta=\eta(\GGG,\nu)=\min\{\nu(\NNN(\III))-\nu(\III):\III\in\II\}.
	\end{align}
	Of course, for $ \AAA\subset \CCC $ we are writing $ \nu(\AAA) $ to denote $ \sum_{i\in \AAA}\nu(i) $.
	The parameter $ \eta $ is an important one that quantifies how stable the system is.
	The following definition is similar to the one given for instance in \cite{BMMR2021} and \cite{SS2022}.
	
	\begin{definition}\label{def_ncond}
		We say that the pair $(\GGG, \nu) $ satisfies $ \ncond $ if $ \eta>0 $ or, in other words,
		if 
		\begin{align}
		\nu(\III)<\nu(\NNN(\III))\qquad \forall \III\in\II.
		\end{align}
	\end{definition}
	
	We emphasize that the definition of $\ncond$ depends on $\rho$ only trough $\GGG$.
	In other words, the only thing that matters in deciding whether $\ncond$ is satisfied is not the precise values of the entries of the matrix $ \rho $ but whether they are or not zero.

	For $ t\in\N_0(=\{0,1,\ldots\}) $, the vector $ X_t=(X_t(i))_{i\in\CCC} $ defined as $ X_t(i)= |\U^i_t|$, $ i\in\CCC $, encodes the number of unmatched nodes in each class, and will be the object to be examined in order to determine the quality of a matching algorithm.
	We will refer to the stochastic process $ X=\{X_t\}_{t\in\mathbb N_0} $ as the \textit{unmatched process}.
	The following proposition investigates the sequence of random variables $ \{\|X_t\|\}_{t\in\mathbb N_0} $, where $\|x\|=\max_{i\in \CCC}|x(i)| $ stands for the supremum norm of a vector $ x\in\mathbb R^{\CCC} $.
	More precisely, if $\ncond$ is not satisfied, the proposition establishes a criterion that implies that the sequence is not \textit{tight},
	where by tight we mean that
	\begin{align}
	\lim_{\kappa\to\infty}\limsup_{t\in\mathbb N_0}\mathbb P[\|X_t\|>\kappa]=0.
	\end{align}
	This notion of tightness coincides with the tightness of the sequence of distributions $\{\mu_t\}_{t\in\mathbb N_0}$ defined on $ \N_0^\CCC $ as $ \mu_t(\cdot)=\mathbb P(X_t\in\cdot) $,
	according to which the sequence  $\{\mu_t\}_{t\in\mathbb N_0}$ is said to be \textit{tight} if for every $ \varepsilon >0 $ there exists a finite set $ \FF=\FF(\varepsilon)\subset \N_0^\CCC $ such that inequality $ \mu_t(\FF)\ge 1-\varepsilon $ holds for every $ t\in\mathbb N_0 $.

	\begin{proposition}\label{env:proposition-necessity}
		If $ \ncond $ is not satisfied, then there exists $ c=c(\nu,\rho)>0 $ 
		such that
		\begin{align}\label{eq:mango}
		\liminf_{t\to\infty}\mathbb P\Big[\|X_t\|\ge \sqrt{t}\Big]\ge c.
		\end{align}
		In particular, the sequence of random variables $ (\|X_t\|)_{t\in\mathbb N_0} $ is not tight.
		Moreover,
		if $ \eta<0 $,
		\begin{align}\label{eq:vaso}
		\liminf_{t\to\infty}\frac{\|X_t\|}{t}>0\qquad a.s.
		\end{align}
	\end{proposition}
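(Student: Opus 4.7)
The proof hinges on a deterministic combinatorial obstruction: an independent class set whose arrivals out-pace those of its neighbourhood will leave unmatched nodes no matter what the policy does. My plan is first to extract such a witness from the failure of $\ncond$, and then to reduce both claims to the LLN and CLT for a simple random walk.

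Concretely, I would fix $\III^*\in\II$ attaining the minimum in the definition of $\eta$, so that $\nu(\III^*)-\nu(\NNN(\III^*))=-\eta\ge 0$. Since $\III^*$ is $\GGG$-independent and no element of an independent set can carry a self-loop, the sets $\III^*$ and $\NNN(\III^*)$ are disjoint, and every edge of $\E_t$ incident to an $\III^*$-node has its other endpoint in an $\NNN(\III^*)$-node. The main --- and only non-probabilistic --- step is then the sample-path inequality
\begin{align}
\sum_{i\in\III^*} X_t(i)\;\ge\;A_t-B_t,
\end{align}
where $A_t=|\{s\le t:c(v_s)\in\III^*\}|$ and $B_t=|\{s\le t:c(v_s)\in\NNN(\III^*)\}|$. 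The reasoning is immediate: each matched $\III^*$-node in $\M_t$ consumes a distinct $\NNN(\III^*)$-node, so the number of matched $\III^*$-nodes cannot exceed $B_t$. Dividing by $|\III^*|$ gives $\|X_t\|\ge (A_t-B_t)/|\III^*|$, reducing both assertions to the study of the random walk $S_t=A_t-B_t$.

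The walk $S_t$ has i.i.d.\ increments $Z_s=\mathbbm 1\{c(v_s)\in\III^*\}-\mathbbm 1\{c(v_s)\in\NNN(\III^*)\}$ taking values in $\{-1,0,1\}$, with mean $-\eta\ge 0$ and variance $\nu(\III^*)+\nu(\NNN(\III^*))-\eta^{2}>0$ (the latter is strict because $\III^*$ is non-empty and $\nu$ has full support). When $\eta<0$ the strong law of large numbers gives $S_t/t\to -\eta>0$ almost surely and \eqref{eq:vaso} follows; when $\eta=0$ the central limit theorem yields a non-degenerate centred Gaussian scaling limit for $S_t/\sqrt t$, so that $\pr{S_t\ge|\III^*|\sqrt t}$ stays bounded below by a fixed positive constant and \eqref{eq:mango} follows with that constant. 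The one step requiring care is the sample-path inequality: it is elementary but is precisely where one exploits that the online constraint cannot undo past allocations, so the only resource available for clearing $\III^*$-nodes is the arrival stream of $\NNN(\III^*)$-nodes. Everything after that reduction is textbook asymptotics for a Bernoulli-difference random walk.
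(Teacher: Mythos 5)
Your proof is correct and follows essentially the same route as the paper: pick a minimizing independent set $\III^*$, bound $\sum_{i\in\III^*}X_t(i)$ path-wise from below by the lazy random walk of $\III^*$-arrivals minus $\NNN(\III^*)$-arrivals (valid for any online policy), and conclude by the CLT and the SLLN. The only cosmetic differences are that you treat $\eta=0$ and $\eta<0$ separately (the $\eta<0$ case of \eqref{eq:mango} then follows from \eqref{eq:vaso}), whereas the paper runs the CLT bound uniformly using $\mu=-\eta\ge 0$, and that you compute the variance of the increments correctly where the paper writes the second moment.
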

	
	Observe that requirement $ \eta<0 $ is stronger than the lack of $ \ncond $.
	Equation \eqref{eq:vaso}
	is to be understood as meaning that, under this requirement, it is not possible to get an online matching algorithm producing matchings with sizes in the order of the size of the graph.
	We will see later that, under $ \ncond $,
	this is not anymore the case.
	If $ \eta=0 $, our criteria does not give information in this regard.
	
	\subsection{Matching policies and Markov representation}\label{sec:markov-representation}

	We define now a particular kind of online matching policy.
	Suppose that $ \V_{t-1} $, $ \{\V^i_{t-1}\}_{i\in\CCC} $, $ \E_{t-1} $, $ \M_{t-1} $ and $ \{\U^i_{t-1}\}_{i\in\CCC} $ have been defined,
	and that a node $ v_t $ arrives (with class $ c(v_t) $ drawn using $ \nu $).
	A class $ i^*_t=\phi(X_{t-1},c(v_t)) $ is chosen as a function of the vector $ X_{t-1} $ and the class $ c(v_t) $,
	where $ \phi:\mathbb N_0^\CCC\times \CCC\to \CCC $ is a function to which constraints will be assigned later, but which, for the purpose of analysing the Markovianity of the algorithm, can be considered fully general.
	The next step is to construct $ \E_t $ by adding to $ \E_{t-1} $
	edges of the form $ \{v_t,v\} $ with $ v\in\V_{t-1} $
	(through the random variables $ \{\psi_{t,v}\}_{v\in\V_{t-1}} $).
	We are in the following alternative:
	\begin{itemize}
		\item If any of these new edges has an endpoint in $ \U_{t-1}^{i_t^*} $, i.e. if the edge-set $\tilde{\E}_t= \{e\in \E_t:e\cap\U_{t-1}^{i_t^*}\neq\emptyset\text{ and }e\cap \{v_t\}\neq\emptyset\} $ is non-empty,
		then we define $ \M_t $ from $ \M_{t-1} $ by adding any of the edges from $ \tilde{\E}_t $, it does not matter which one.
		Observe that, in this case, we have $ X_t(i_t^*)=X_{t-1}(i_t^*)-1 $.
		\item If otherwise $ \tilde{\E}_t= \emptyset $, then we set $ \M_t=\M_{t-1} $.
	\end{itemize}
	We note that, in the second case,
	an edge is not added to the matching even
	in the presence of other nodes available to be matched with $ v_t $, i.e. even if there existed a class $ i\neq i^*_t $ and a node $ v\in\U^i_{t-1} $ with $ \{v_t,v\}\in\E_t $.
	
	It is convenient to give an alternative, equivalent way of thinking this algorithm.
	As before, suppose that the elements at time $ t-1 $ have been defined, that a node $ v_t $ has arrived, and that the class $ i_t^* $ has been chosen as a function of $ X_{t-1} $ and $ c(v_t) $.
	If $ \U_{t-1}^{i_t^*}=\emptyset $, then we set $ \M_t=\M_{t-1} $.
	Else, consider an arbitrary enumeration $ \U_{t-1}^{i_t^*}=\{w_1,\ldots,w_L\} $.
	If $ \{v_t,w_1\}\in\E_{t} $, i.e. if $ \psi_{t,w_1}=1 $,
	we set $ \M_t=\M_{t-1}\cup\{\{v_t,w_1\}\} $.
	If otherwise $ \{v_t,w_1\}\notin\E_{t} $,
	we ``try'' with $ w_2 $, this meaning that,
	if $ \psi_{t,w_2}=1 $,
	we set $ \M_t=\M_{t-1}\cup\{\{v_t,w_2\}\} $.
	In case also $ \{v_t,w_2\}\notin \E_t $, we try with $ w_3 $, and so on.
	If we do not succeed with any of the vertices in $ \U_{t-1}^{i_t^*} $, that is, if $ \psi_{t,w_1}=\ldots=\psi_{t,w_L}=0 $, then we set $ \M_t=\M_{t-1} $, no matter if there were more edges in $ \E_t $ available to be added to $ \M_{t-1} $.
	Note that, to define this algorithm, not all the information about $ \E_t $ was used but only the one regarding the Bernoulli's random variables until the first success,
	that is, until the first index $ l\in\{1,\ldots,L\} $ for which $ \psi_{t,l}=1 $.
	This observation could be translated into an inexpensive implementation of the algorithm.

	Under this policy, the stochastic process 
	$\suite{X_t}$
	is a discrete-time Markov chain (DTMC) on the state-space $\N_0^{\CCC}$,
	about which we will set out its transition probabilities below.
	For every $i\in \CCC$, we set $ 1_i\in \N_0^{\CCC} $ to be the canonical vector defined as $ 1_i(j)=\mathbbm 1\{i=j\} $.
	Suppose that, at step $t-1$, the state of the DTMC is $x\in\N_0^{\CCC}$,
	namely $ X_{t-1}=x $.
	A new node $v_t$ is added to the graph, whose class $ i=c(v_t)\in\CCC $ is drawn from the distribution $\nu$ independently of everything else,
	and then another class $j=i_t^*=\phi(x,i)\in\CCC$ is chosen.
	We stress that nothing forbids $i$ and $j$ to be the same class {\em a priori}.
	Next a node $u_t\in \U^j_{t-1}$ is possibly matched with $v_t$. 
	The non-zero transition probabilities of the DTMC determined by this dynamic are given by
	\begin{align}\label{transitions_1}
	P(x,x+1_i)&
	=\nu(i)(1-\rho(i,\phi(x,i)))^{x(\phi(x,i))},\quad x\in\mathbb N_0^\CCC,\\
	\label{transitions_2}
	P(x,x-1_j)&=\sum_{i:\phi(x,i)=j}\nu(i)[1-(1-\rho(i,j))^{x(j)}],\quad x\in\N_0^\CCC:  x-1_j\in \mathbb N_0^\CCC .
	\end{align}
	The transition $ P(x,x+1_i) $ corresponds to the addition of one unmatched $ i $-node.
	For this to happen, there has to be an arrival of an $ i $-node which is not matched with anyone.
	This lack of matching occurs when no  $ \phi(x,i) $-node that was unmatched at time $ t-1 $ resulted adjacent to the arriving node in the construction of $ \E_t $,
	event that has probability $ (1-\rho(i,\phi(x,i)))^{x(\phi(x,i))} $
	(when $ \U_{t-1}^j\neq\emptyset $, this corresponds to the event $ \psi_{t,w_1}=\ldots=\psi_{t,w_L}=0 $ in the alternative construction given in the previous paragraph).
	This explains the r.h.s. of \eqref{transitions_1}.
	We note that this probability is $ 1 $ even in the case in which $ \rho(i,\phi(x,i))=1 $ and $ x(\phi(x,i))=0 $,  in concordance with the convention $ 0^0=1 $.
	The transition $ P(x,x-1_j) $ corresponds to the departure of a $ j $-node.
	For this to occur, there are two necessary conditions. The first one is the arrival of a node whose class $ i $ is such that the class chosen by the policy is $ j $, which provides an explanation for the sum in the r.h.s. of \eqref{2nd_condition}.
	The second is that the matching actually takes place, an event whose complement has probability $ (1-\rho(i,j))^{x(j)} $, making sense of the factors $ 1-(1-\rho(i,j))^{x(j)} $ within the sum.

	\paragraph{Hypotheses over $ \phi $.}

	Let $ \alpha:\CCC\to \{1,\ldots,|\CCC|\} $ be a arbitrary bijective function that establishes a priority between classes.
	The policy $ \phi $ is defined as
	\begin{align}\label{policy}
	\phi(x,i)=\argmax_{j\in \CCC}(w(x(j),\rho(i,j)),\alpha(j))
	\end{align}
	with the lexicographical order between ordered pairs.
	In other words, $ \phi(x,i) $ is the class $ j $ that maximizes the weight $ w(x(j),\rho(i,j)) $, being $ \alpha $ the one that decides in case of ties.
	Here $ w $ is an abstract weight-function with the requirements set out below.
	
	\begin{assumption}[Hypotheses over $ w $]\label{assumption}
		The weight-function $ w:\mathbb N_0\times [0,1]\to [0,\infty) $ is assumed to satisfy the following hypotheses:
		\begin{enumerate}
			\item\label{hyp1} $ w(n,r)>0 $ if and only if $ n,r>0 $;
			\item\label{hyp2} $ w $ is coordinate-wise  non-decreasing:
			$ w(n,r)\le w(n',r) $ and $ w(n,r)\le w(n,r') $ for every $ n\le n' $ and every $ r\le r' $;
			\item\label{hyp3} for every $ r\in (0,1] $ there exists $ m(r)\in\mathbb N $ such that inequalities
			$ w(n-1,1)<w(n,r) $ and $ w(n,1)<w(n+1,r) $ hold for every $ n\ge m(r) $.
		\end{enumerate}
	\end{assumption}

	The first hypothesis prevents the selection of a class with which there is no opportunity for a matching to take place (unless all classes meet the same conditions).
	We will expand on this in Remark \ref{env:mate}.
	The second one is somehow natural: on the one hand, the more individuals there are of certain class, the more interested we are in performing a match in order to prevent nodes from accumulation;
	on the other, the larger $ r $ is, the higher the probability that the involved matching attempts will actually take place.
	
	\begin{definition}\label{def_nstar}
		For every $ r\in (0,1] $, let $ n^*(r)\in \mathbb N $ be defined as
		\begin{align}
		n^*(r)=\min\{m\in\mathbb N:  w(n-1,1)<w(n,r) \text{ and } w(n,1)<w(n+1,r) \ \forall n\ge m \}.
		\end{align}
		The set on which the minimum is taken is non-empty due to the third hypothesis in Assumption \ref{assumption}, and hence the definition of $ n^*(r) $ is proper.
	\end{definition}

	\begin{examples}\label{examples}
		Two relevant examples are $ w_1(n,r)=n\mathbbm 1\{r>0\} $ and $ w_2(n,r)=n(1-(1-r)^n) $.
		The weight-function $ w_1 $ defines a `match the longest' type of policy, since it only considers the number of stored nodes and not the connection probabilities.
		About the second one, expression $ w_2(x(j),\rho(i,j))=x(j)(1-(1-\rho(i,j))^{x(j)}) $ is nothing but the number of $ j $-nodes times the probability of performing a match with one of them if the class $ j $ is chosen.
	\end{examples}
	
	\begin{remark}
		The third hypothesis in Assumption \ref{assumption}  says that, when the number of nodes of certain class is large enough, all the relevance of the weight relies on the first coordinate and not on the second one (Lemma \ref{lemma:indistinguishable} sheds light on this direction).
		This makes all such weights equivalent to $ w_1 $ defined in Examples \ref{examples} in saturated regimes.
		Nevertheless, we think that it is valuable to consider general weight-functions depending also on the connection probabilities, such as $ w_2 $, since they may have a better performance in practice.
	\end{remark}

	\begin{remark}\label{env:mate}
		In this remark, we present an important feature of the family of policies under consideration.
		We stand at $ X_{t-1}=x $, and let $ i $ be the class of the incoming node.
		Let $ \SSS_x=\{i\in\CCC:x(i)>0\} $ be the support of $ x $.
		Assume that $ \NNN(i)\cap\SSS_x\neq \emptyset $,
		or, in other words,
		that there are nodes present in the systems that are likely to be connected to the incoming node.
		Then the class $ j=\phi(x,i) $ chosen by the policy lies in $ \NNN(i)\cap\SSS_x $.
		This follows by assumption \ref{hyp1}, from which a class $ j $ is such that $ w(x(j),\rho(i,j))>0 $ precisely if and only if  $ j\in\NNN(i)\cap \SSS_x $.

	\end{remark}
	
	\paragraph{State-space.}
	In order to present things in a more transparent way, we
	prefer to assume that the initial condition of our DTMC $ X $ is the empty state $ 0\in\N_0^\CCC $,
	but of course this is not a real restriction and other initial distributions could be considered.
	Under this assumption, it is natural to define the following space representing the states that are reachable from the initial condition:
	\begin{align}\label{state-space}
	\XX=\{x\in \N_0^\CCC:\mathbb P(X_t=x)>0 \text{ for some }t\in \mathbb N_0 \}.
	\end{align}
	We restrict from now on our DTMC $ X $ to the state-space $ \XX $.
	Irreducibility is a very desirable condition for the state-space in order to stand on some theoretical basis regarding stability.
	Unfortunately this property is not automatic and will be stated as a hypothesis in some of the forthcoming results.
	Nevertheless, we can give conditions that guarantee such irreducibility.

	\begin{restatable}{proposition}{irreducible}
		\label{prop:irreducible}
		Suppose that for every $ i\in \CCC $ there exists $ j\in \CCC $ such that $ \rho(i,j)>0 $ or, in other words, that $ \NNN(i)\neq \emptyset $.
		Then the transition matrix $ (P(x,y))_{x,y\in\XX} $ is irreducible.
	\end{restatable}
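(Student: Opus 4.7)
The plan is to use the empty state $0$ as a pivot: by the very definition of $\XX$, every $y\in\XX$ is reachable from $0$ in finitely many steps with positive probability, so it suffices to prove that from any $x\in\XX$ one can reach $0$ with positive probability. I will proceed by induction on the quantity $n(x):=\sum_{i\in\CCC}x(i)$, showing that whenever $x\neq 0$ the system can decrease $n(x)$ by one in a single step with positive probability.

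Fix $x\in\XX$ with $x\neq 0$, and pick any $j\in\SSS_x$, so $x(j)>0$. By the hypothesis of the proposition, $\NNN(j)\neq\emptyset$, hence there exists $i\in\CCC$ with $\rho(i,j)>0$; by symmetry of $\rho$ this is equivalent to $j\in\NNN(i)$, so that $\NNN(i)\cap\SSS_x\neq\emptyset$. Remark \ref{env:mate} then ensures that the policy picks $j^*:=\phi(x,i)\in\NNN(i)\cap\SSS_x$, so both $x(j^*)>0$ and $\rho(i,j^*)>0$. Since also $\nu(i)>0$ by the standing positivity assumption on $\nu$, formula \eqref{transitions_2} yields
\begin{align}
P(x,x-1_{j^*})\ge \nu(i)\bigl[1-(1-\rho(i,j^*))^{x(j^*)}\bigr]>0.
\end{align}

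Iterating this one-step reduction produces a finite sequence $x=x^{(0)},x^{(1)},\ldots,x^{(n(x))}=0$ with $P(x^{(k-1)},x^{(k)})>0$ for every $k$, hence $P^{n(x)}(x,0)>0$. Combined with the existence, for each target $y\in\XX$, of some $t_y\in\N_0$ with $P^{t_y}(0,y)>0$ (directly from the definition of $\XX$), this gives $P^{n(x)+t_y}(x,y)>0$, which is irreducibility. The only genuinely structural ingredient is Remark \ref{env:mate}: the first item of Assumption \ref{assumption} forces the argmax in \eqref{policy} to remain inside $\SSS_x$ whenever a compatible unmatched node is available, and this is precisely what rules out the pathological scenario in which $\phi$ keeps selecting empty classes and thereby prevents the system from being drained. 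I expect this to be the only spot in the argument requiring care.
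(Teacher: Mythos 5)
Your proposal is correct and follows essentially the same route as the paper's proof: both reduce to reaching $0$ from an arbitrary $x$ (reachability from $0$ being built into the definition of $\XX$), and both use the hypothesis to produce an arriving class $i$ with $\NNN(i)\cap\SSS_x\neq\emptyset$, then invoke Remark \ref{env:mate} so that $\phi(x,i)$ lands in $\NNN(i)\cap\SSS_x$ and a match occurs with positive probability, decreasing the total node count by one at each step. The only cosmetic difference is that you start from an occupied class $j$ and pass to $i\in\NNN(j)$ via symmetry of $\rho$, whereas the paper argues directly with $\NNN(\SSS_x)\neq\emptyset$; this is the same argument.
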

	
	The proof of this proposition is given in the Appendix \ref{sctn:proofs}.
	To conclude this section, we observe that
	the hypothesis on which this proposition is based
	actually follows if we assume $ \ncond $.
	Indeed, for the non-empty independent set $ \{i\} $, we have
	$ \nu(i)<\nu(\NNN(i)) $, so $ \nu(\NNN(i))>0 $ and hence $ \NNN(i)\neq 0 $.

	\section{Main result and corollaries}\label{sec:main-result}
	
	\subsection{Fundamental drift inequality}
	\label{sec:fundamental-drift}
	
	For a function $ h:\XX\to [0,\infty)$, we define its drift  at  $ x\in\XX $ as
	\begin{align}\label{eq:drift-definition}
	\dd h(x)=Ph(x)-h(x)=\sum_{y\in\XX}P(x,y)h(y)-h(x).
	\end{align}
	After some necessary preliminaries,
	we will establish an inequality that controls the drift of the quadratic function defined as $ q(x)=\sum_{i\in\CCC}x(i)^2 $.
	
	Let $ \rhomin $ be the minimum positive connection probability
	\begin{align}
	\rhomin=\min\{\rho(i,j):\rho(i,j)>0\},
	\end{align}
	let $ n^*=n^*(\rhomin) $ (recall Definition \ref{def_nstar}), and let
	\begin{align}
	K= \max_{n\in\mathbb N_0}n(1-\rhomin)^n. 
	\end{align}
	For $ x\in\XX $ and $ \AAA\subset \CCC $, let $\|x\|_\AAA=\max_{i\in \AAA}x(i)$ (observe that $ \|x\|_\CCC $ coincides with the supremum norm $ \|x\| $).
	Finally, define the set of classes with and without self-loops as
	\begin{align}
	\CCC_{\rho}^+=\{i\in \CCC: \rho(i,i)>0\}\mbox{ and }\CCC_{\rho}^0=\{i\in \CCC: \rho(i,i)=0\}.
	\end{align}
	
	\begin{theorem}\label{env:theorem-main}
		Let $ \rho\in [0,1]^{\CCC\times \CCC} $ be a  symmetric matrix and $ \nu $ a positive probability defined on $ \CCC $.
		Assume that the pair $ (\GGG,\nu) $ satisfies $ \ncond $, being $ \GGG $ the graph defined in Subsection \ref{sec:ncond}.
		Let $ w:\mathbb N_0\times [0,1]\to[0,\infty) $ be a weight-function satisfying Assumption \ref{assumption},
		and let $ \phi:\XX\times \CCC\to \CCC $ be the associated policy defined in \eqref{policy}.
		Let $ X=\{X_t\}_{t\in\mathbb N_0} $ be the DTMC with state space $ \XX $ defined in \eqref{state-space} and transition probabilities defined in \eqref{transitions_1} and \eqref{transitions_2}.
		Recall the definition of $ \eta $ given in Subsection \ref{sec:ncond}.
		Then the quadratic function $ q:\XX\to\mathbb R $
		defined as $ q(x)=\sum_{i\in \CCC}x(i)^2 $ satisfies
		\begin{align}\label{eq:main_bound}
		\dd q(x)\le
		-2\eta\|x\|_{\CCC_{\rho}^0}-\sum_{i\in \CCC_{\rho}^+:x(i)\ge n^*}2x(i)\nu(i)+1+2n^*+4K(1+|\CCC_{\rho}^+|)
		\end{align}
		for every $ x\in\XX $.
	\end{theorem}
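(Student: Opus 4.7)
The plan is to compute $\dd q(x)$ from the transitions \eqref{transitions_1}--\eqref{transitions_2}, and then exploit two structural properties of the max-weight policy $\phi$: its greedy bias toward saturated self-loop classes, and a compatibility-aware monotonicity forcing $x(\phi(x,i))\ge x(k)$ whenever $k\in\NNN(i)$ is saturated. Writing $j_i:=\phi(x,i)$ and $p_i:=(1-\rho(i,j_i))^{x(j_i)}$, the identities $q(x+1_i)-q(x)=2x(i)+1$, $q(x-1_j)-q(x)=1-2x(j)$ together with $\sum_i \nu(i)p_i+\sum_i \nu(i)(1-p_i)=1$ immediately yield
\begin{align*}
\dd q(x)=1+2\sum_{i\in\CCC}\nu(i)\bigl[x(i)p_i - x(j_i)(1-p_i)\bigr].
\end{align*}

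The decisive auxiliary lemma I would prove next reads: \emph{for every $i\in\CCC$ and every $k\in\NNN(i)$ with $x(k)\ge n^*$, one has $x(j_i)\ge x(k)$.} The argument is a one-line contradiction using Assumption~\ref{assumption}(\ref{hyp3}) and the monotonicity of $w$: were $x(j_i)<x(k)$, then $w(x(j_i),\rho(i,j_i))\le w(x(k)-1,1)<w(x(k),\rhomin)\le w(x(k),\rho(i,k))$, contradicting $j_i=\argmax_j(w(x(j),\rho(i,j)),\alpha(j))$. Two specializations will be used: taking $k=i$ gives $x(j_i)\ge x(i)$ whenever $i\in\CCC_{\rho}^+$ and $x(i)\ge n^*$; and for any $i_0\in\argmax_{\CCC_{\rho}^0} x(\cdot)$ with $m:=\|x\|_{\CCC_{\rho}^0}\ge n^*$, taking $k=i_0$ gives $x(j_i)\ge m$ for every $i\in\NNN(i_0)$.

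With these tools I would bound the summand in the drift case by case. For $i\in\CCC_{\rho}^+$ with $x(i)\ge n^*$, the first specialization combined with the definition of $K$ yields $x(i)p_i\le x(j_i)p_i\le K$, so $x(i)p_i-x(j_i)(1-p_i)\le 2K-x(i)$, contributing in total $-\sum_{i\in\CCC_{\rho}^+,\,x(i)\ge n^*}2\nu(i)x(i) + 4K|\CCC_{\rho}^+|$ (the factor $|\CCC_{\rho}^+|$ arises from applying the $K$-bound once per such class). For $i\in\CCC_{\rho}^+$ with $x(i)<n^*$, the naive $x(i)p_i\le n^*$ contributes at most $2n^*$. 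For $i\in\CCC_{\rho}^0$ the NCOND condition enters via Definition~\ref{def_ncond} applied to the non-empty $\GGG$-independent singleton $\{i_0\}$, giving $\nu(\NNN(i_0))\ge\nu(i_0)+\eta$; combined with the second specialization $x(j_i)\ge m$ on $\NNN(i_0)$ and with the refinement $x(i)p_i\le x(j_i)p_i\le K$ on the overlap $\NNN(i_0)\cap\CCC_{\rho}^0$ (using $x(i)\le m\le x(j_i)$), the positive and negative $m$-contributions on $\CCC_{\rho}^0$ collapse to exactly $-2\eta m$ modulo an additive $4K$. The case $m<n^*$ is absorbed trivially since then $-2\eta m \ge -2\eta n^* \ge -2n^*$.

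The main obstacle is the last bookkeeping step: the refinement $x(i)p_i\le K$ (rather than the cruder $x(i)p_i\le m$) on the overlap $\NNN(i_0)\cap\CCC_{\rho}^0$ is essential, since otherwise the residual $m$-coefficient would be inflated by the mass $\nu(\CCC_{\rho}^0\setminus\NNN(i_0))$ which NCOND does not control. The double use of $x(j_i)p_i\le K$ — once on $\CCC_{\rho}^+$ and once on $\NNN(i_0)\cap\CCC_{\rho}^0$ — together with its per-class summation explains the final constant $4K(1+|\CCC_{\rho}^+|)$ in \eqref{eq:main_bound}.
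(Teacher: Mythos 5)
Your opening drift identity is correct, and your auxiliary lemma is exactly the paper's Lemma \ref{lemma:indistinguishable}, proved the same way. The gap is in the final bookkeeping for $\CCC_{\rho}^0$, which is where the real difficulty of the theorem sits. First, the term $-2\eta\|x\|_{\CCC_{\rho}^0}$ cannot be extracted from the arrival classes $i\in\CCC_{\rho}^0$ alone: the matchings that deplete the saturated classes of $\CCC_{\rho}^0$ are triggered by arrivals of classes in $\NNN(\cdot)$ of those classes, and these arriving classes may lie in $\CCC_{\rho}^+$. Your case split discards precisely those negative contributions: in your first case you weaken $-x(j_i)(1-p_i)$ to $-x(i)+2K$, and in your second case you drop it entirely. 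Concretely, take $\CCC=\{1,2,3\}$ with $\rho(1,3)=\rho(2,3)=\rho(3,3)=\tfrac12$, all other entries $0$, and $\nu=(0.2,0.2,0.6)$, so that $\ncond$ holds with $\eta=0.2$; consider $x=(m,m,0)$ with $m$ large (reachable from $0$). Arrivals of classes $1,2$ contribute $+0.8m$ to $\dd q(x)$, and the only compensation, about $-1.2m$, comes from arrivals of class $3\in\CCC_{\rho}^+$ with $x(3)=0<n^*$, a term your second case throws away; your scheme thus yields an upper bound of order $+0.8m$, whereas \eqref{eq:main_bound} demands about $-0.4m$.

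Second, even restricted to $\CCC_{\rho}^0$, invoking $\ncond$ only for the singleton $\{i_0\}$ gives $\nu(\NNN(i_0))\ge\nu(i_0)+\eta$, which does not control the positive mass $\sum_i\nu(i)x(i)p_i$ carried by other saturated classes of $\CCC_{\rho}^0$ that are not neighbors of $i_0$ (several non-adjacent classes can all sit at height close to $m$, as in the example, where $2\notin\NNN(1)$). One genuinely needs the $\ncond$ inequality for multi-element independent sets. This is why the paper proceeds differently: it first reduces, at bounded cost, to a configuration supported on a $\GGG$-independent subset of $\CCC_{\rho}^0$ while harvesting the $-\sum_{i\in\CCC_\rho^+}2x(i)\nu(i)$ term (Propositions \ref{prop:homogeneous}, \ref{prop:loops}, \ref{prop:compatible}, \ref{oldpolicy}) — keeping in the reduced chain the negative drift generated by arrivals of all neighboring classes — and only then applies the level-set telescoping of Proposition \ref{prop:lyapunovold}, which uses $\ncond$ on unions of level sets $\HHH_1\cup\ldots\cup\HHH_{L_1}$. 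Your proposal has no substitute for either mechanism, so the claimed ``collapse to exactly $-2\eta m$ modulo an additive $4K$'' is unjustified and, by the example above, false as stated.
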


	\subsection{Stability and tightness}\label{sec:stability-and-tightness}
	When the DTMC $ X=\{X_t\}_{t\in\mathbb N_0} $  is irreducible, we say that it is stable (or positive recurrent) if there exists a unique invariant distribution.
	Let  $ \tau_0=\inf\{t\in\mathbb N:X_t=0\} $ be the first positive visit to the empty state.
	Stability is equivalent to the finiteness of the expectation of  $ \tau_0 $,
	and to the tightness of the sequence of distributions $ \{\mu_t\}_{t\in\mathbb N_0} $ (defined in Subsection \ref{sec:ncond}).

	\begin{corollary}\label{env:corollary-stability}
		Assume that the DTMC $ X $  is irreducible.
		Then $ X $ it is stable if and only if condition $ \ncond $  is fulfilled.
	\end{corollary}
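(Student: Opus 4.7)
The plan is to prove the two implications separately. Necessity follows almost immediately from Proposition \ref{env:proposition-necessity}, while sufficiency relies on a Foster--Lyapunov argument fed by the drift bound of Theorem \ref{env:theorem-main}.

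For necessity, I would argue as follows: if $X$ is irreducible and positive recurrent, then $\mu_t$ converges to the unique invariant distribution, so $\{\mu_t\}_{t\in\N_0}$ is tight, which as observed in Subsection \ref{sec:ncond} is equivalent to the tightness of $\{\|X_t\|\}_{t\in\N_0}$. But Proposition \ref{env:proposition-necessity} shows that if $\ncond$ failed then $\liminf_{t\to\infty}\mathbb P[\|X_t\|\ge\sqrt{t}]\ge c>0$, which contradicts tightness. Hence $\ncond$ must hold.

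For sufficiency, I would apply the Foster--Lyapunov criterion with $q$ as Lyapunov function, reducing the task to showing that $\dd q(x)\le -1$ outside a finite subset of $\XX$. Set $\nu_{\min}=\min_{i\in \CCC_{\rho}^+}\nu(i)$ (with the convention $\nu_{\min}=+\infty$ when $\CCC_{\rho}^+=\emptyset$), which is strictly positive since $\nu$ is a positive probability, and put $c=2\min(\eta,\nu_{\min})$, which is positive because $\ncond$ forces $\eta>0$. Given any $x\in\XX$ with $\|x\|\ge n^*$, pick $i^*\in\argmax_{i\in\CCC}x(i)$, so that $x(i^*)=\|x\|$. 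If $i^*\in\CCC_{\rho}^0$, then $\|x\|_{\CCC_{\rho}^0}=\|x\|$ and the first negative term of \eqref{eq:main_bound} contributes $-2\eta\|x\|\le -c\|x\|$; otherwise $i^*\in\CCC_{\rho}^+$ with $x(i^*)\ge n^*$, and the $i^*$-summand of the second negative term contributes $-2\nu(i^*)\|x\|\le -c\|x\|$. In either case, Theorem \ref{env:theorem-main} yields $\dd q(x)\le -c\|x\|+D$ with $D=1+2n^*+4K(1+|\CCC_{\rho}^+|)$. Choosing $R>n^*$ large enough that $-cR+D\le -1$, the set $\{x\in\XX:\|x\|\le R\}$ is finite and $\dd q(x)\le -1$ off of it, so Foster--Lyapunov delivers positive recurrence.

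The only delicate point is checking that the two negative terms in \eqref{eq:main_bound} together dominate $\|x\|$; the case split above, according to whether the maximizing coordinate lies in $\CCC_{\rho}^0$ or $\CCC_{\rho}^+$, resolves it cleanly. All the genuinely difficult work has already been absorbed into Theorem \ref{env:theorem-main} and Proposition \ref{env:proposition-necessity}, so no further obstacle is anticipated.
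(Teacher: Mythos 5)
Your proof is correct and follows essentially the same route as the paper: necessity via Proposition \ref{env:proposition-necessity} combined with the equivalence of stability and tightness of $\{\mu_t\}$, and sufficiency via Foster--Lyapunov applied to $q$ using the drift bound of Theorem \ref{env:theorem-main}, with your ball $\{x\in\XX:\|x\|\le R\}$ playing the role of the paper's explicit finite set $\FF$ (your case split on whether the maximizing coordinate lies in $\CCC_\rho^0$ or $\CCC_\rho^+$ is the same mechanism the paper encodes in its two-sided definition of $\FF$). One minor slip: the chain has period $2$, so $\mu_t$ does not converge to the invariant distribution; however positive recurrence still gives tightness of $\{\mu_t\}$ (convergence along even and odd times), so your necessity argument is unaffected.
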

	
	Note that the direction of Theorem \ref{env:corollary-stability} that states that stability is a consequence of $ \ncond $  is simply an application of Proposition \ref{env:proposition-necessity} to our particular policy.
	In the context of this proposition, the more general concept of tightness could be defined as a measurement of stability
	even under the lack of Markovianity.
	With this respect, Proposition \ref{env:proposition-necessity} and Theorem \ref{env:corollary-stability} can be interpreted as meaning that $ \ncond $ is the maximum stability region for general online matching policies:
	the absence of $ \ncond $ precludes any stable policy, whereas its presence allows us to find a stable one.

	\subsection{Perfect matching i.o.}
	\label{sec:perfect-matching-io}
	
	Note that, in particular, Theorem \ref{env:corollary-stability} says that, under $ \ncond $, the DTMC $ X $ is recurrent,
	where recurrence can be equivalently defined as $ \mathbb P[\tau_0<\infty]=1 $
	or as
	$\mathbb P[X_t=0\mbox{ infinitely often}]=1$.
	Since the empty state $ 0 $ corresponds to a perfect matching, we can equivalently said that, almost surely, a perfect matching occurs infinitely often.
	The next result concentrate this reasoning,
	and gives the opposite conclusion if $ \eta<0 $.
	Our criteria does not give information if $ \eta=0 $.
	
	\begin{corollary}\label{env:perfect-matching}
		If $ \ncond $ holds, i.e. if $ \eta>0 $, then, almost surely, the online matching is perfect infinitely often.
		If $ \eta<0 $, then the probability of having a perfect matching infinitely often is zero.
	\end{corollary}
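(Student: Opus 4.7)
}

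The plan is to reduce both halves of the statement to results already established in the paper, the positive direction to Corollary \ref{env:corollary-stability}, and the negative direction to Proposition \ref{env:proposition-necessity}. The key observation on which everything rests is that the empty state $0\in\XX$ corresponds exactly to a perfect matching of $\G_t$ (since $X_t(i)=|\U_t^i|$ for all $i\in\CCC$), so the event ``a perfect matching occurs at time $t$'' coincides with $\{X_t=0\}$, and the event ``a perfect matching occurs infinitely often'' coincides with $\{X_t=0\text{ i.o.}\}$.

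For the positive direction, assume $\eta>0$. First I would verify that the DTMC $X$ is irreducible, so that Corollary \ref{env:corollary-stability} applies: as noted right after Proposition \ref{prop:irreducible}, $\ncond$ forces $\nu(\NNN(i))>\nu(i)>0$ for every singleton $\{i\}\in\II$, hence $\NNN(i)\neq\emptyset$ for every $i\in\CCC$, and Proposition \ref{prop:irreducible} yields irreducibility of $(P(x,y))_{x,y\in\XX}$. Then by Corollary \ref{env:corollary-stability} the chain $X$ is positive recurrent on $\XX$. Standard Markov chain theory gives $\mathbb P[X_t=0\text{ i.o.}]=1$ for an irreducible positive recurrent chain (since $0\in\XX$ by the choice of initial condition), which yields the conclusion.

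For the negative direction, assume $\eta<0$. The matching policy defined in \eqref{policy} is an admissible online matching policy in the sense of Section \ref{sec:model_definition}, so Proposition \ref{env:proposition-necessity} applies; in particular, its stronger statement \eqref{eq:vaso} gives $\liminf_{t\to\infty}\|X_t\|/t>0$ almost surely. This implies $\|X_t\|\to\infty$ almost surely, so for a.e.\ trajectory there exist only finitely many $t$ with $X_t=0$, and hence $\mathbb P[X_t=0\text{ i.o.}]=0$, which is the desired conclusion.

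Both steps are essentially bookkeeping on top of previously proved material, so I do not anticipate a real obstacle. The only subtle point to handle carefully is the reduction of the positive part to Corollary \ref{env:corollary-stability}: its statement explicitly assumes irreducibility, and one must therefore invoke Proposition \ref{prop:irreducible} together with the remark that $\ncond$ implies $\NNN(i)\neq\emptyset$ for every class $i$ before quoting positive recurrence. Everything else follows by translating between the language of $X_t=0$ and that of a perfect matching on $\G_t$.
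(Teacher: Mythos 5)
Your proposal is correct and follows essentially the paper's own route: the positive direction is exactly the paper's reduction to Corollary \ref{env:corollary-stability} (with the irreducibility point handled via Proposition \ref{prop:irreducible}, as the paper also does), identifying the empty state with a perfect matching. For the negative direction the paper re-invokes transience of the lazy random walk $\{S_t^\III\}_{t\in\N_0}$ and stochastic domination, while you simply quote \eqref{eq:vaso} of Proposition \ref{env:proposition-necessity}; since \eqref{eq:vaso} is proved by that very domination argument, this is the same argument packaged slightly more economically, and it is valid.
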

	
	\subsection{Ergodicity and asymptotic optimality}
	\label{sec:ergodicity}
	
	By using classical results in conjunction with the fundamental drift inequality stated in Theorem \ref{env:theorem-main}, we can derive the following result.

	\begin{corollary}\label{env:agua}
		Assume $ \ncond $, and let $ \pi $ be the unique invariant distribution associated to the DTMC $ X $.
		Then 
		\begin{align}\label{eq:ergodic}
		\lim_{t\to\infty}\frac{1}{t}\sum_{s=1}^t\|X_s\|=\int \pi(\dd x)\|x\|<\infty\qquad \mbox{a.s.}
		\end{align}
		and, as a consequence,
		\begin{align}\label{eq:pepino}
		\lim_{t\to\infty}\frac{\|X_t\|}{t}=0\qquad \mbox{a.s.}
		\end{align}
	\end{corollary}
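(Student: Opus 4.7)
The plan is to upgrade Theorem \ref{env:theorem-main} to a linear drift inequality, deduce $\pi$-integrability of $\|\cdot\|$ from a Foster--Lyapunov moment bound, and then conclude via the strong law of large numbers for positive-recurrent Markov chains.

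\textbf{Step 1 (linear drift).} First I would show that \eqref{eq:main_bound} implies the uniform bound
\begin{align}
\dd q(x)\le -c\,\|x\|+C\qquad \forall\,x\in\XX,
\end{align}
for some constants $c,C>0$. Since $\CCC=\CCC_\rho^0\sqcup\CCC_\rho^+$, we have $\|x\|=\max(\|x\|_{\CCC_\rho^0},\|x\|_{\CCC_\rho^+})$. If the maximum is attained in $\CCC_\rho^0$, the term $-2\eta\|x\|_{\CCC_\rho^0}$ already equals $-2\eta\|x\|$, and $\eta>0$ by $\ncond$. If it is attained at some $i^\ast\in\CCC_\rho^+$ with $x(i^\ast)=\|x\|\ge n^*$, then the corresponding summand in $\sum_{i\in\CCC_\rho^+:x(i)\ge n^*}2x(i)\nu(i)$ contributes at least $2\nu_{\min}\|x\|$, where $\nu_{\min}=\min_{i\in\CCC}\nu(i)>0$. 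Taking $c=\min(2\eta,2\nu_{\min})$ gives the bound on $\{\|x\|\ge n^*\}$, and on the complementary finite set the missing linear term is absorbed into a larger $C$.

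\textbf{Step 2 (moment bound).} By Corollary \ref{env:corollary-stability}, the chain is irreducible and positive recurrent with unique invariant distribution $\pi$. Rewriting the previous bound as
\begin{align}
\dd q(x)\le -\tfrac{c}{2}\|x\|+b\,\mathbbm 1_{\{\|x\|\le R\}}(x),
\end{align}
for $R$ large enough and a suitable constant $b$, the indicator region is a finite hence petite set. A classical Meyn--Tweedie moment bound (applied to the non-negative Lyapunov function $q$, with a truncation argument to handle the possibility that $q$ itself is not a priori $\pi$-integrable) then yields
\begin{align}
\int_\XX \pi(\dd x)\,\|x\|<\infty.
\end{align}

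\textbf{Step 3 (ergodic conclusion).} Since $f(x)=\|x\|\in L^1(\pi)$ and the chain is irreducible and positive recurrent, the Markov strong law of large numbers gives \eqref{eq:ergodic}. For \eqref{eq:pepino}, setting $S_t=\sum_{s=1}^t\|X_s\|$ we write
\begin{align}
\frac{\|X_t\|}{t}=\frac{S_t}{t}-\frac{t-1}{t}\cdot\frac{S_{t-1}}{t-1},
\end{align}
and both terms on the right converge a.s.\ to $\int\pi(\dd x)\|x\|$ by \eqref{eq:ergodic}, so the difference vanishes almost surely.

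The main technical point is Step 2: extracting a genuine linear moment bound from a drift estimate whose Lyapunov function is quadratic. The care required is in writing the drift inequality in the canonical form $\dd V\le -f+b\,\mathbbm 1_C$ with $C$ petite and $f\ge 0$, and in justifying the integration against $\pi$ when $q$ itself is not known to be $\pi$-integrable.
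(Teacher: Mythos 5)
Your proposal is correct and follows essentially the same route as the paper: the paper likewise extracts a linear drift bound from \eqref{eq:main_bound} by the same case analysis on where $\|x\|$ is attained (yielding $\dd q(x)\le -2\|x\|(\eta\wedge\min_{i\in\CCC_\rho^+}\nu(i))+\mathrm{const}$), then invokes the Meyn--Tweedie comparison theorem (Theorem 14.3.7 of \cite{MT2009}) to get $\int\pi(\dd x)\|x\|<\infty$, the Markov-chain ergodic theorem for \eqref{eq:ergodic}, and the same telescoping identity for \eqref{eq:pepino}. The only differences are cosmetic (your constant uses $\min_{i\in\CCC}\nu(i)$ rather than $\min_{i\in\CCC_\rho^+}\nu(i)$, and the paper records an explicit bound on the invariant mean).
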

	
	The first line \eqref{eq:ergodic} in this corollary is twofold:
	on the one hand, it asserts the finiteness of the invariant mean $ \int \pi(\dd x)\|x\| $; on the other, it establishes the convergence of the ergodic averages.
	A detailed bound for this invariant mean is given in the proof.
	Since $ \|X_t\| $ quantifies the number of unmatched nodes, the second line can \eqref{eq:pepino} can be interpreted an meaning that, almost surely, the size of the matching is of the order of the size of the graph.
	In other words, the algorithm is asymptotically optimal.

	\subsection{Long-run behavior}
	\label{sec:long-run-behaviour}
	
	Assume that $ \ncond $ holds (and hence $ X $ is irreducible due to the paragraph immediately after Proposition \ref{prop:irreducible}).
	Let $ \pi $ be the unique invariant distribution (defined on $ \XX $) whose existence is guaranteed by Theorem \ref{env:corollary-stability}.
	Since our DTMC has period $ 2 $, we cannot expect to have long-run convergence to $ \pi $.
	Nevertheless a periodic version can be formulated.
	For $  l  \in\{0,1\}$, let
	\begin{align}
	\XX_ l =\Big\{x\in\XX:\sum_{i\in\CCC}x(i)= l  \ (\mbox{mod }2)\Big\},
	\end{align}
	the subspace containing the states whose total number of nodes is congruent with $  l  $ modulo $ 2 $,
	and let $ \pi_ l  $ be defined as
	\begin{align}
	\pi_ l (x)=2\pi(x)\mathbbm 1\{x\in\XX_ l \}.
	\end{align}
	For $  l \in\{0,1\} $, $ \pi_ l  $ is in fact a probability due to the ergodic theorem (see for instance Subsection 4.2.1 in \cite{Bre2020}).
	The following result is simply a restatement of Theorem 4.2.3 in \cite{Bre2020}.
	\begin{corollary}
		Under $ \ncond $, the DTMC $ X $ satisfies
		\begin{align}
		\lim_{t\to\infty}\sum_{x\in\XX_{l}}
		|\mathbb P(X_{2t+l}=x)-\pi_l(x)|=0,\qquad l\in\{0,1\}.
		\end{align}
	\end{corollary}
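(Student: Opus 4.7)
My approach is to reduce the statement to the classical total variation convergence theorem for irreducible, aperiodic, positive recurrent Markov chains, applied to each of the two cyclic classes of $X$.

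The first step is to identify $\XX_0$ and $\XX_1$ as precisely the cyclic classes of $X$ and to confirm that the period of $X$ equals $2$. From the transition probabilities \eqref{transitions_1}--\eqref{transitions_2}, every one-step transition is of the form $x \to x + 1_i$ or $x \to x - 1_j$, so $P(x,x) = 0$ for all $x\in\XX$ and the total count $\sum_{i\in\CCC} x(i)$ changes by exactly $\pm 1$ at each step. An immediate induction from $X_0 = 0 \in \XX_0$ yields $X_t \in \XX_{t \bmod 2}$ almost surely, so return times to any state are all even; combined with the absence of one-step self-returns, this forces the period to equal $2$ exactly.

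Second, I would invoke Corollary \ref{env:corollary-stability}, which under $\ncond$ yields that $X$ is irreducible and positive recurrent on $\XX$ with unique invariant probability $\pi$. The subsampled chain $Y_t = X_{2t}$ then lives on $\XX_0$; irreducibility of $Y$ on $\XX_0$ follows from the cyclic-class decomposition, aperiodicity from the fact that the period of $X$ equals $2$, and positive recurrence is inherited from $X$. A short ergodic-decomposition argument identifies the invariant probability of $Y$ as $\pi_0$: $\pi$ distributes its mass equally between $\XX_0$ and $\XX_1$, so conditioning on $\XX_0$ produces the normalization factor $2$ appearing in $\pi_0(x) = 2\pi(x)\mathbbm 1\{x \in \XX_0\}$.

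The final step is a direct invocation of Theorem 4.2.3 in \cite{Bre2020} applied to $Y$, which delivers the total variation convergence for $l = 0$; the identical argument applied to $Z_t = X_{2t+1}$, an irreducible aperiodic positive recurrent chain on $\XX_1$ with invariant probability $\pi_1$, handles $l = 1$. There is no serious obstacle here; the proof is almost purely bookkeeping, and the only step worth a brief justification is the identification of $\pi_l$ as the invariant of the subsampled chain, which is a standard consequence of the ergodic decomposition for periodic Markov chains already alluded to in the preamble to the corollary.
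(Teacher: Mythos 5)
Your argument is correct and follows essentially the same route as the paper, whose entire proof is the observation that the corollary is a restatement of Theorem 4.2.3 in \cite{Bre2020}, i.e. the classical total-variation convergence theorem applied along the two cyclic classes $\XX_0,\XX_1$ --- precisely the subsampling reduction you spell out. The one small repair concerns your period computation: even return times together with $P(x,x)=0$ only force the period to be an even number at least $2$, so to get period exactly $2$ you should exhibit a positive-probability two-step return, e.g. $P^2(0,0)>0$ (an arrival followed by a successful match, available since under $\ncond$ every class $i$ has $\NNN(i)\neq\emptyset$ and hence $\rho(i,j)>0$ for some $j$), after which the rest of your argument goes through verbatim.
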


	\section{Proofs}\label{sec:proof-strategy}

	\subsection{Proof of Proposition \ref{env:proposition-necessity}}
	
	We start by establishing a lower bound for the unmatched process.
	For a fixed $ \GGG $-independent set $ \III $,
	and for $ t\in\N_0 $,
	let $ \xi^\III_t=\mathbbm 1\{c(i_t)\in \III\}-\mathbbm 1\{c(i_t)\in \NNN(\III)\} $.
	Observe that $ \{\xi^\III_t\}_{t\in\mathbb N} $ is an i.i.d. sequence of random variables with common distribution $ \mathbb P[\xi^\III_t=1]=\nu(\III) $,
	$ \mathbb P[\xi^\III_t=-1]=\nu(\NNN(\III)) $,
	and $ \mathbb P[\xi^\III_t=0]=1-\mathbb P[\xi^\III_t=1]-\mathbb P[\xi^\III_t=-1] $.
	If we define $ S^\III_t=\sum_{s=1}^t\xi^\III_s $, $ t\in\N_0 $, then $ \{S^\III_t\}_{t\in \mathbb N_0} $ is a lazy random walk on $ \mathbb Z $.
	The key observation is that
	\begin{align}\label{eq:stochastic_bound}
	|\CCC|\|X_t\|\ge \sum_{i\in\III}X_t(i)\ge S_t^\III \qquad \forall t\in\mathbb N_0.
	\end{align}
	This is because the number of unmatched $ \III $-nodes at time $ t $ is equal to the number of $ \III $-node arrivals minus the number of matchings involving $ \III $-nodes, and because all these matchings necessarily involve $ \NNN(\III) $-nodes.
	Assume now that $ \ncond $ does not hold, i.e. $ \eta\le 0 $,
	and let 
	$ \III\in\II $ be such that
	$ \nu(\NNN(\III))-\nu(\III)=\eta $.
	Since $ \mu=-\eta $ and $ \sigma^2=\nu(\III)+\nu(\NNN(\III))>0 $ respectively are  the mean and the variance of $ \xi^\III_t $, we get
	\begin{align}
	\mathbb P\Big[\|X_t\|\ge \sqrt t\Big]&\ge
	\mathbb P\Big[S_t^\III\ge |\CCC| \sqrt t\Big]
	\\
	&=
	\mathbb P\Big[\frac{S_t^\III-\mu t}{ \sqrt t\sigma}\ge \frac{|\CCC|\sqrt t-\mu t}{\sqrt t\sigma}\Big]
	\\
	&\ge
	\mathbb P\Big[\frac{S_t^\III-\mu t}{\sqrt t\sigma}\ge \frac{|\CCC|}{\sigma}\Big]\xrightarrow[t\to\infty]{}1-\Phi\Big(\frac{|\CCC|}{\sigma}\Big)
	\end{align}
	due to the central limit theorem,
	where $ \Phi $ stands for the normal standard cumulative distribution function.
	Take $ c= 1-\Phi(\frac{|\CCC|}{\sigma})$ to get \eqref{eq:mango}.
	
	Assume now that $ \eta<0 $,
	and take $ \III\in\II $ achieving the minimum, that is, such that $ \nu(\NNN(\III))-\nu(\III)=\eta $.
	The strong law of large numbers gives
	\begin{align}
	\lim_{t\to\infty}\frac{S^\III_t}{t}=-\eta>0\qquad a.s.,
	\end{align}
	and
	\eqref{eq:vaso} follows by stochastic comparison.

	\begin{remark}\rm
		The above proof can be reformulated in a slightly different, but insightful way.
		Suppose that $ \eta<0 $,
		let $ \III\in\II $ be a minimizer set,
		and let $ \{S^\III_t\}_{t\in\N_0} $ be its associated lazy random walk.
		Again by the strong law of large numbers,
		\begin{align}
		\frac{1}{t}\bigg[\sum_{i\in\III}|\V_t^i|-\sum_{i\in\NNN(\III)}|\V_t^i|\bigg]
		= \frac{S_t^\III}{t}\xrightarrow[t\to\infty]{}-\eta>0\qquad a.s.
		\end{align}
		In particular, there exists a (random) point in time after which the set $\bigcup_{i\in \III} \V_t^i$ always has 
		strictly more nodes than $\W_t=\bigcup_{i\in \NNN(\III)} \V_t^i$.
		For all such $t$, 
		the set $\bigcup_{i\in I} \V_t^i$ is itself a $ \G_t $-independent set,
		so the subgraph of $ \G_t $ induced by the node-subset
		$	\V_t\setminus\W_t$
		has more than $ \sum_{i\in \III} |\V_t^i| $ odd connected components (every singleton that is a subset of an independent set is an odd connected component).
		Summarizing, $ \W_t $ is a node-subset such that the subgraph in $ \G_t $ induced by its complement has a number of odd connected components that exceeds its cardinality.
		Hence $\W_t$ does not satisfy Tutte's condition, implying that $ \G_t $ cannot have a perfect matching.
	\end{remark}

	\subsection{Proof of Corollary \ref{env:corollary-stability}}

	As already mentioned, one direction of Corollary \ref{env:corollary-stability} follows immediately from Proposition \ref{env:proposition-necessity}, so it only remains to prove that $ \ncond $ implies stability.
	We say that a function $ h:\XX\to[0,\infty) $ is \textit{Lyapunov} if there exist $ \varepsilon >0 $ and $ \FF\subset \XX $ finite such that
	\begin{align}\label{1st_condition}
	Ph(x)=\sum_{y\in\XX}P(x,y)h(y)<\infty\mbox{ for every $ x\in \FF $}
	\end{align}
	and
	\begin{align}\label{2nd_condition}
	\dd h(x)\le-\varepsilon\mbox{ for every $ x\in\XX\setminus \FF $.}
	\end{align}
	Foster-Lyapunov theorem (see Theorem 7.1.1 in \cite{Bre2020} for instance)
	guarantees stability if a Lyapunov function is found.
	In our case, stability will be obtained by proving, as a direct consequence of Theorem \ref{env:theorem-main}, that the quadratic function $ q $ is Lyapunov.
	Since the set $ \{y\in\XX: P(x,y)\neq 0\} $ is finite for every $ x\in \XX $, condition \eqref{1st_condition} is fulfilled automatically.
	The real challenge is condition \eqref{2nd_condition},
	that follows by choosing $ \varepsilon =1 $ and
	\begin{align}
	\FF=\Big\{x\in\XX: \|x\|_{\CCC_{\rho}^0}\le \frac{1+n^*+2K(1+|\CCC_{\rho}^+|)}{\eta},
	\|x\|_{\CCC_{\rho}^+}\le \frac{2+2n^*+4K(1+|\CCC_\rho^+|)}{\min_{i\in \CCC_{\rho}^+}\nu(i)} \Big\},
	\end{align}
	the minimum over an empty set defined as $ +\infty $,
	and by using inequality \eqref{eq:main_bound}.

	\subsection{Proof of Corollary \ref{env:perfect-matching}}

	We give a quick proof of the missing direction of Corollary \ref{env:perfect-matching}.
	Assume $ \eta<0 $, and let $ \III\in \II $ attain the minimum.
	In this case, the associated lazy random walk $ \{S_t^\III\}_{t\in\N_0} $ is transient to infinity, and the result follows by stochastic domination.
	
	\subsection{Proof of Corollary \ref{env:agua}}

	We have
	\begin{align}
	\eta\|x\|_{\CCC_\rho^0}+\sum_{i\in \CCC_\rho^+:x(i)\ge n^*}x(i)\nu(i) \ge (\|x\|-n^*)(\eta\wedge\min_{i\in \CCC_\rho^+}\nu(i)).
	\end{align}
	To see why this it true, separate into the cases $ \|x\|=\|x\|_{\CCC_\rho^0} $, $ \|x\|=\|x\|_{\CCC_\rho^+}\ge n^* $ and
	$ \|x\|=\|x\|_{\CCC_\rho^+}< n^* $.
	Use this inequality in \eqref{eq:main_bound} to get
	\begin{align}\label{mate1}
	\dd q(x)\le-2\|x\|(\eta\wedge\min_{i\in \CCC_\rho^+}\nu(i))+1+2n^*\big[1+(\eta\wedge\min_{i\in \CCC_\rho^+}\nu(i))\big]+4K(1+|\CCC_\rho^+|).
	\end{align}
	Now we can use Theorem 14.3.7 from \cite{MT2009} to obtain
	\begin{align}
	\int\pi(\dd x)\|x\|
	\le
	\frac{1+2n^*\big[1+(\eta\wedge\min_{i\in \CCC_\rho^+}\nu(i))\big]+4K(1+|\CCC_\rho^+|)}{2(\eta\wedge\min_{i\in \CCC_\rho^+}\nu(i))}.
	\end{align}
	Once the finiteness of the invariant expectation has been obtained,
	we can apply Theorem 3.3.2 from \cite{Bre2020} to obtain the convergence of the ergodic averages.
	Finally, equation \eqref{eq:pepino}
	follows because
	\begin{align}
	0\le \frac{\|X_t\|}{t}=\frac{1}{t}\sum_{s=1}^t\|X_s\|
	-\frac{t-1}{t}\frac{1}{t-1}\sum_{s=1}^{t-1}\|X_s\|
	\xrightarrow[t\to\infty]{}0\qquad a.s.
	\end{align}

	\subsection{Proof strategy of Theorem \ref{env:theorem-main}}

	Towards the end of this subsection, we give a short proof of Theorem \ref{env:theorem-main}.
	To arrive at such a proof, we need to prepare the field through a series of previous results, the proof of which are given in the Appendix.
	One of these results, Proposition \ref{prop:lyapunovold},
	is a result that has been obtained in \cite{SS2022} for a more general framework.
	
	The first step is to compare the drift  $ \dd q $ with the drift with respect to another DTMC with homogeneous values of the connection probabilities.
	
	Recall the definition of $ \rhomin $ and, for $ i,j\in \CCC $, let $ \hat\rho $ be the connection probability matrix defined as $ \hat \rho(i,j)=\rhomin\mathbbm 1\{\EEE(i,j)=1\} $.
	Consider the transition probabilities associated to this connection probabilities:
	\begin{align}
	\hat P(x,x+1_i)
	=\nu(i)(1-\hat\rho_{i,\hat\phi(x,i)})^{x(\hat\phi(x,i))}
	\end{align}
	if $ x\in \XX $, and
	\begin{align}
	\hat P(x,x-1_i)=
	\sum_{j:\hat\phi(x,j)=i}\nu(j)[1-(1-\hat\rho(j,i))^{x(i)}]
	\end{align}
	if $ x-1_i\in \XX $,
	with
	\begin{align}
	\hat\phi(x,i)=\argmax_{j\in \CCC}(w(x(j),\hat \rho(i,j)),\alpha(j)).
	\end{align}
	Let $ \hat{\dd} q $ be the generator of $ q $ associated to these transition probabilities, defined as in \eqref{eq:drift-definition} but replacing $ P(x,y) $ with $ \hat P(x,y) $.
	
	\begin{restatable}[Control of the classes without many nodes]{proposition}{homogeneous}
		\label{prop:homogeneous}
		Let $ \varphi:\XX\to\XX $ be the function that conserves only the classes with more than $ n^*$ nodes, defined as
		$(\varphi(x))(i)=x(i)\mathbbm 1\{x(i)\ge n^*\}$.
		Then
		\begin{align}
		\dd q(x)\le \hat{\dd} q(\varphi(x))+2n^*\qquad\mbox{for every $ x\in\XX $.}
		\end{align}
	\end{restatable}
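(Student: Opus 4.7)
The plan is to expand both drifts explicitly from the transition probabilities of Subsection \ref{sec:markov-representation}, write their difference as a sum over incoming classes, and bound it term by term. Using the identity $q(x\pm 1_i)-q(x)=\pm 2x(i)+1$, a direct computation yields
\begin{align}
\dd q(x)=1+2\sum_{i\in\CCC}\nu(i)\bigl[x(i)-p_i(x(i)+x(\phi(x,i)))\bigr],
\end{align}
where $p_i=1-(1-\rho(i,\phi(x,i)))^{x(\phi(x,i))}$ is the match probability given an arrival of class $i$; an analogous expression holds for $\hat{\dd} q(\varphi(x))$ after replacing $\phi,\rho,x,p_i$ by $\hat\phi,\hat\rho,\varphi(x),\hat p_i^\varphi$. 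Writing $j_i=\phi(x,i)$ and $\hat j_i=\hat\phi(\varphi(x),i)$, subtracting gives
\begin{align}
\dd q(x)-\hat{\dd} q(\varphi(x))=2\sum_{i\in\CCC}\nu(i)\Bigl\{[x(i)-\varphi(x)(i)]-p_i(x(i)+x(j_i))+\hat p_i^\varphi(\varphi(x)(i)+\varphi(x)(\hat j_i))\Bigr\}.
\end{align}

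The first summand is immediate: since $x(i)-\varphi(x)(i)$ is either $0$ (when $x(i)\ge n^*$) or bounded by $n^*-1$, one has $\sum_i\nu(i)[x(i)-\varphi(x)(i)]\le n^*-1$. It therefore remains to prove the pointwise matching-gain inequality
\begin{align}
\hat p_i^\varphi\bigl(\varphi(x)(i)+\varphi(x)(\hat j_i)\bigr)\le p_i\bigl(x(i)+x(j_i)\bigr)\qquad\text{for every }i\in\CCC,
\end{align}
which is trivial if $\hat p_i^\varphi=0$. In the remaining case $\hat p_i^\varphi>0$ one necessarily has $\varphi(x)(\hat j_i)>0$ and $\rho(i,\hat j_i)>0$, and in particular $x(\hat j_i)\ge n^*$.

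The main obstacle, and the step where the definition of $n^*$ and Assumption \ref{assumption}(3) are essential, is the following key lemma: whenever $\hat p_i^\varphi>0$, one has $x(j_i)\ge x(\hat j_i)\ge n^*$. I would prove it by contradiction: if $x(j_i)\le x(\hat j_i)-1$, then the max-weight property of $\phi$ yields $w(x(j_i),\rho(i,j_i))\ge w(x(\hat j_i),\rho(i,\hat j_i))\ge w(x(\hat j_i),\rhomin)$, while coordinate-wise monotonicity and property~3 at $x(\hat j_i)\ge n^*$ imply $w(x(j_i),\rho(i,j_i))\le w(x(j_i),1)\le w(x(\hat j_i)-1,1)<w(x(\hat j_i),\rhomin)$, a contradiction. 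Once this lemma is in hand, the pointwise matching-gain inequality follows from the three elementary comparisons $\varphi(x)(i)\le x(i)$, $\varphi(x)(\hat j_i)=x(\hat j_i)\le x(j_i)$, and $\hat p_i^\varphi=1-(1-\rhomin)^{x(\hat j_i)}\le 1-(1-\rho(i,j_i))^{x(j_i)}=p_i$ (using $\rho(i,j_i)\ge\rhomin$ and $x(j_i)\ge x(\hat j_i)$), and the proof concludes with $\dd q(x)-\hat{\dd} q(\varphi(x))\le 2(n^*-1)\le 2n^*$.
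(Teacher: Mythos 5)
Your proof is correct, but it takes a genuinely different route from the paper's. The paper expands the drift coordinate-wise, as $1+\sum_{i\in\SSS_x}2x(i)P(x,x+1_i)-\sum_{i\in\SSS_x}2x(i)P(x,x-1_i)$, and compares the arrival and departure sums with those of the homogenized chain at $\varphi(x)$ separately; because the two policies may route the same arriving class to \emph{different} classes having the same count, the departure comparison is only carried out after aggregating over the level sets $\HHH_l=\{i:x(i)=n_l\}$, and the whole argument leans on Lemma \ref{lemma:indistinguishable} in its strong form, $x(\phi(x,i))=x(\hat\phi(x,i))=\max\{x(j):j\in\NNN(i)\}$. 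You instead condition on the arriving class, writing $\dd q(x)=1+2\sum_i\nu(i)\bigl[x(i)-p_i\bigl(x(i)+x(\phi(x,i))\bigr)\bigr]$ (which is indeed valid, also when $\phi(x,i)=i$), and prove a pointwise, per-arriving-class domination of the matching-gain term; this needs only the one-sided bound $x(j_i)\ge x(\hat j_i)\ge n^*$, which you obtain by the same weight comparison that underlies Lemma \ref{lemma:indistinguishable}, and it sidesteps the level-set aggregation entirely, since the gain depends on the chosen classes only through their counts and match probabilities, and both compare in the right direction even when $j_i\neq\hat j_i$. Your route is shorter and in fact gives the marginally sharper constant $2(n^*-1)$; what the paper's formulation buys is the stronger Lemma \ref{lemma:indistinguishable}, which is reused verbatim in Propositions \ref{prop:loops}, \ref{prop:compatible} and \ref{oldpolicy}. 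One small step you should make explicit: the comparison $\hat p_i^\varphi\le p_i$ uses $\rho(i,j_i)\ge\rhomin$, i.e.\ $\rho(i,j_i)>0$, which does not follow from the key lemma alone; it does follow in one line from hypothesis \ref{hyp1} of Assumption \ref{assumption} (in the spirit of Remark \ref{env:mate}), since $w(x(j_i),\rho(i,j_i))\ge w(x(\hat j_i),\rho(i,\hat j_i))>0$ forces both $x(j_i)>0$ and $\rho(i,j_i)>0$.
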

	
	For a class-subset $ \AAA\subset \CCC $, let $ 1_\AAA $ be the configuration defined as $ (1_\AAA)(i)=\mathbbm 1\{i\in \AAA\} $.
	For $ x\in \XX $, 
	the configuration $ x1_\AAA \in \XX $ is defined as the coordinate-wise product, namely $ (x1_\AAA)(i)=x(i)\mathbbm 1\{i\in \AAA\} $.
	Recall the definition of the support of $ x $, $ \SSS_x $,
	given in Remark \ref{env:mate}.
	The following result allows us to deal with the nodes whose classes have self-loops.
	
	\begin{restatable}[Reduction to self-loops-free supports]{proposition}{loops}
		\label{prop:loops}
		If $ x\in \XX $ is such that $ x(i)\ge n^* $ for every $ i\in \SSS_x $, then
		\begin{align}
		\hat{\dd}q(x)\le \hat{\dd}q(x1_{\CCC_\rho^0})+4K|\CCC_\rho^+|-\sum_{i\in \CCC_\rho^+}2x(i)\nu(i).
		\end{align}
	\end{restatable}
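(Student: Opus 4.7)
The plan is to decompose each drift as $\hat{\dd} q(z) = \sum_{k \in \CCC} \nu(k) \Delta_k(z)$ with per-arrival contributions
\[
\Delta_k(z) = q_k(z)\bigl(2z(k)+1\bigr) + \bigl(1-q_k(z)\bigr)\bigl(1 - 2z(\hat\phi(z,k))\bigr), \qquad q_k(z) = (1-\hat\rho(k,\hat\phi(z,k)))^{z(\hat\phi(z,k))},
\]
and then compare these contributions class by class between $x$ and $y := x 1_{\CCC_\rho^0}$.

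The crucial preliminary step is a self-selection lemma: for every $k \in \CCC_\rho^+$ with $x(k) \geq n^*$, one has $x(\hat\phi(x,k)) \geq x(k)$. Since $k \in \NNN(k)$ thanks to the self-loop, $k$ itself is a valid candidate in the lexicographic argmax, so $w(x(\hat\phi(x,k)), \rhomin) \geq w(x(k), \rhomin)$. If one had $x(\hat\phi(x,k)) \leq x(k)-1$, then the chain of inequalities
\[
w(x(\hat\phi(x,k)), \rhomin) \leq w(x(k)-1, \rhomin) \leq w(x(k)-1, 1) < w(x(k), \rhomin),
\]
obtained from the monotonicity of $w$ (Assumption \ref{assumption}(\ref{hyp2})) followed by the defining inequality of $n^*$ (Assumption \ref{assumption}(\ref{hyp3})), would contradict the argmax property. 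A direct corollary is the estimate $x(k) q_k(x) \leq x(k)(1-\rhomin)^{x(k)} \leq K$.

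With the lemma in hand, the per-class estimates are obtained as follows. For $k \in \CCC_\rho^+$ with $x(k) \geq n^*$, substituting $x(\hat\phi(x,k)) \geq x(k)$ and $q_k(x) x(k) \leq K$ into the formula for $\Delta_k(x)$ yields the key bound $\Delta_k(x) \leq -2x(k) + 4K + 1$. For $k \in \CCC_\rho^0$, as well as for $k \in \CCC_\rho^+$ with $x(k) = 0$, I would verify the clean comparison $\Delta_k(x) \leq \Delta_k(y)$ via a short case analysis on $\hat\phi(x,k)$: either $\hat\phi(x,k) \in \CCC_\rho^0$, in which case $\hat\phi(y,k) = \hat\phi(x,k)$ and the two contributions coincide; or $\hat\phi(x,k) \in \CCC_\rho^+$, in which case the $x$-value of the chosen $\CCC_\rho^+$ class dominates that of any $\CCC_\rho^0$ alternative, and the monotonicity of the map $n \mapsto n(1-(1-\rhomin)^n)$ forces $(1-q_k(x))x(\hat\phi(x,k)) \geq (1-q_k(y))y(\hat\phi(y,k))$, from which the desired comparison follows.

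The main obstacle will be the aggregation step, because the naive bound $\Delta_k(y) \leq 1$ for $k \in \CCC_\rho^+$ is too crude: $\Delta_k(y)$ can be substantially negative when a $\CCC_\rho^+$-arrival at $y$ triggers a match at a $\CCC_\rho^0$ class with many unmatched nodes. I would handle this by pairing each such strongly negative $\Delta_k(y)$ with its symmetric counterpart arising from the analogous $\CCC_\rho^0$-arrival at $x$ whose policy selects the same $\CCC_\rho^+$ class $k$; these paired contributions largely cancel, leaving only residuals of order $K$ per class in $\CCC_\rho^+$. The remaining negative gain of order $-2x(k)$ per $\CCC_\rho^+$ class is exactly the one supplied by the estimate $\Delta_k(x) \leq -2x(k) + 4K + 1$, and summing everything produces the claimed inequality $\hat{\dd} q(x) \leq \hat{\dd} q(x 1_{\CCC_\rho^0}) + 4K|\CCC_\rho^+| - 2\sum_{k \in \CCC_\rho^+} x(k)\nu(k)$.
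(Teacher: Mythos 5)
Your decomposition $\hat{\dd}q(z)=\sum_{k}\nu(k)\Delta_k(z)$ and the per-class estimates are fine: the self-selection lemma and the bound $\Delta_k(x)\le -2x(k)+4K+1$ for $k\in\CCC_\rho^+$, and the comparison $\Delta_k(x)\le\Delta_k(y)$ for $k\in\CCC_\rho^0$, are in substance the paper's term-by-term comparison via Lemma \ref{lemma:indistinguishable} together with $n(1-\rhomin)^n\le K$. The genuine gap is the aggregation step, which you only sketch, and the pairing you propose cannot close it: the strongly negative term $\nu(k)\Delta_k(y)$ on the right-hand side (a $k$-arrival at $y=x1_{\CCC_\rho^0}$ matching into a large $\CCC_\rho^0$ class) would have to be cancelled by $\CCC_\rho^0$-arrivals at $x$ selecting $k$, but those enter the left-hand side with weights $\nu(j)$, $j\in\CCC_\rho^0$, not $\nu(k)$, and nothing in the hypotheses relates these weights. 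Equivalently, the single term $-2(1-q_k(x))x(\hat\phi(x,k))$ in $\Delta_k(x)$ is asked to do two jobs: dominate the matching term in $\Delta_k(y)$ (which has the same order whenever the largest class compatible with $k$ lies in $\CCC_\rho^0$) \emph{and} produce the extra gain $-2x(k)$; one arrival removes at most one node, so it can do only one of the two. In fact the inequality you are aiming at fails on reachable states: take $\CCC=\{k,j\}$ with $\rho(k,k)=\rho(k,j)=\rhomin\in(0,1)$, $\rho(j,j)=0$ (so $\hat\rho=\rho$), $w=w_1$ (so $n^*=1$), $\nu(k)=0.9$, and $x(k)=x(j)=a$ with $a$ large; such $x$ lies in $\XX$ and $\ncond$ even holds. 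Every arrival at $x$ matches into a class holding $a$ nodes, so $\hat{\dd}q(x)\approx 1-2a$, whereas at $y=(0,a)$ only $k$-arrivals can match, so $\hat{\dd}q(y)\approx 1-2a(\nu(k)-\nu(j))=1-1.6a$, and the claimed right-hand side is $\approx 1-1.6a+4K-1.8a=1+4K-3.4a$, which is strictly below $1-2a$ once $a>3K$. So no bookkeeping of your per-class terms can complete the argument as stated.

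You should also know that this is precisely the delicate point in the paper's own proof: in its final chain it replaces $1+\sum_{i\ne k}(\cdots)-\sum_{j\ne k}(\cdots)$, evaluated at $y=x1_{\CCC\setminus\{k\}}$, by $\hat{\dd}q(y)$, thereby discarding the nonnegative matching term $2y(\hat\phi(y,k))\,\nu(k)\bigl(1-(1-\rhomin)^{y(\hat\phi(y,k))}\bigr)$ generated at $y$ by arrivals of the removed class $k$; keeping it, that argument only yields $\hat{\dd}q(x)\le\hat{\dd}q(y)+2y(\hat\phi(y,k))\nu(k)+4K-2x(k)\nu(k)$, which is consistent with the example above. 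So your instinct that the aggregation is the crux is well-founded, but it cannot be repaired by cancellation; any fix must weaken the statement (for instance by retaining an extra term of the form $2\|x\|_{\NNN(k)\cap\CCC_\rho^0}\,\nu(k)$ per class $k\in\CCC_\rho^+$, or by subtracting only the part of $x(k)$ not already accounted for by compatible $\CCC_\rho^0$ classes) and then propagate that change to the proof of Theorem \ref{env:theorem-main}.
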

	
	The next step reduces the case to configurations whose supports are $ \GGG $-independent sets.
	
	\begin{restatable}[Reduction to support that are $ \GGG $-independent sets]{proposition}{compatible}
		\label{prop:compatible}
		Let $ x\in \XX $ 
		be such that $ \SSS_x\subset\CCC_\rho^0 $,
		and such that $ x(i)\ge n^* $ for every $ i\in \SSS_x $.
		Then
		there exists $ y\in \XX$ such that $ \|x\|=\|y\| $, $ \SSS_y\subset\CCC_\rho^0  $, $ \SSS_y$ is a $ \GGG $-independent set,
		and
		\begin{align}
		\hat{\dd}q(x)\le \hat{\dd}q(y)+2K.
		\end{align}
	\end{restatable}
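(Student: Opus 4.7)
The plan is to construct $y$ from $x$ by zeroing out all coordinates outside a carefully chosen $\GGG$-independent subset $\III\subset\SSS_x$ that contains a class achieving $\|x\|$, and then to bound $\hat{\dd}q(x)-\hat{\dd}q(y)$ through a class-by-class comparison of the drifts.

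The crucial ingredient is the choice of $\III$. I would take $\III$ to be the set of what I will call \emph{dominant} classes,
\[
\III=\{i\in\SSS_x:(x(i),\alpha(i))>_{\mathrm{lex}}(x(j),\alpha(j))\text{ for every }j\in\NNN(i)\cap\SSS_x\}.
\]
The set $\III$ is $\GGG$-independent: if $i_1,i_2\in\III$ were compatible, each would strictly lex-dominate the other, a contradiction. Moreover, the lex-maximum $i^*$ of $(x(\cdot),\alpha(\cdot))$ over $\SSS_x$ (uniquely defined since $\alpha$ is a bijection and $\SSS_x\subset\CCC_\rho^0$ precludes self-loops) automatically lies in $\III$ and satisfies $x(i^*)=\|x\|$. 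Setting $y:=x\cdot 1_\III$ then yields $\|y\|=\|x\|$, $\SSS_y=\III\subset\CCC_\rho^0$, and $\SSS_y$ a $\GGG$-independent set, as required.

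For the drift comparison I would partition $\CCC=\III\sqcup(\SSS_x\setminus\III)\sqcup(\CCC\setminus\SSS_x)$ and analyze, for each incoming class $i$, the contributions to $\hat{\dd}q(x)$ and $\hat{\dd}q(y)$, abbreviating $j_{i,x}:=\hat\phi(x,i)$, $j_{i,y}:=\hat\phi(y,i)$. For $i\in\III$, the $\GGG$-independence of $\SSS_y$ forces $\hat\phi(y,i)$ to target a class with zero matching probability, and a direct computation shows that the $i$-contribution to $\hat{\dd}q(x)-\hat{\dd}q(y)$ is non-positive. For $i\in\CCC\setminus\SSS_x$, the contribution under $x$ is no larger than under $y$ because $\hat\phi(x,i)$ is an argmax over the larger set $\NNN(i)\cap\SSS_x\supset\NNN(i)\cap\III$; the argument uses that $n\mapsto[1-(1-\rhomin)^n]n$ is non-decreasing. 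Only $i\in\SSS_x\setminus\III$ may contribute positively.

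Bounding this positive contribution is the main obstacle. Since $i\in\SSS_x\setminus\III$ is non-dominant, there exists some $j\in\NNN(i)\cap\SSS_x$ with $(x(j),\alpha(j))\geq_{\mathrm{lex}}(x(i),\alpha(i))$, hence $x(j_{i,x})\geq x(i)$. After algebraic cancellation, the positive part of the $i$-contribution reduces to at most
\[
2\nu(i)(1-\rhomin)^{x(j_{i,x})}x(i)\leq 2\nu(i)(1-\rhomin)^{x(i)}x(i)\leq 2\nu(i)K,
\]
where the first inequality uses $x(j_{i,x})\geq x(i)$ and the second is the definition of $K$. Summing over $i\in\SSS_x\setminus\III$ produces $\hat{\dd}q(x)-\hat{\dd}q(y)\leq 2K\sum_{i\in\SSS_x\setminus\III}\nu(i)\leq 2K$. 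The most delicate technical point will be organizing the case split cleanly (further distinguishing whether $\NNN(i)\cap\III$ is empty in the $i\in\SSS_x\setminus\III$ case) and verifying the cancellations, in particular reconciling the lexicographic tie-breaking of the policy with the $\alpha$-dependent argmax so that the crucial inequality $x(j_{i,x})\geq x(i)$ can indeed be invoked.
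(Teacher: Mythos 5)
Your proposal is correct and is essentially the paper's own argument: the paper likewise reduces to the locally dominant classes, only it does so by iterating a lemma that deletes the lex-minimal class $k$ of a non-singleton connected component of $\SSS_x$ at cost $2\nu(k)K$, and its per-step comparison uses exactly your ingredients (Lemma \ref{lemma:indistinguishable}, applicable because all classes in $\SSS_x$ hold at least $n^*$ nodes, to guarantee $x(\hat\phi(x,i))\ge x(i)$ and hence the bound $x(i)(1-\rhomin)^{x(\hat\phi(x,i))}\le K$, together with the monotonicity of $n\mapsto n(1-(1-\rhomin)^n)$ for the departure terms). Your one-shot removal of all non-dominant classes, with the three-case comparison by incoming class and the total cost $2K\sum_{i\in\SSS_x\setminus\III}\nu(i)\le 2K$, is just a non-inductive organization of the same estimates.
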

	
	For $ i,j\in \CCC $, let $ \tilde \rho(i,j)=\EEE(i,j) $, and let $ \tilde{\dd} $ be the associated generator.
	In this dynamic, if the chosen class has nodes, the match is  performed with probability one.
	The next step is to compare the generators $ \hat{\dd} $ and $ \tilde{\dd} $.

	\begin{restatable}[Comparison between $ \hat{\dd} $ and $ \tilde{\dd} $]{proposition}{oldpolicy}
		\label{oldpolicy}
		If  $ x\in\XX $ is such that $ \SSS_x\subset\CCC_\rho^0 $, $ x(i)\ge n^* $ for  $ i\in \SSS_x $, and $ \SSS_x$ is a $ \GGG $ independent set,
		then
		\begin{align}
		\hat{\dd}q(x)\le \tilde{\dd}q(x)+2K.
		\end{align}
	\end{restatable}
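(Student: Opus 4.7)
The plan is to show that, under the three hypotheses on $x$, the two policies $\hat\phi(x,\cdot)$ and $\tilde\phi(x,\cdot)$ are identical as functions of the arriving class $i$, so both Markov dynamics propose the same class on every arrival. Once this is established, the two drifts can be compared arrival-by-arrival, and the only source of discrepancy will be that in the $\hat\rho$-dynamic a proposed match fails with probability $(1-\rhomin)^{x(j^*)}$, whereas in the $\tilde\rho$-dynamic it always succeeds.

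To identify the two policies, I would first note that in both argmaxes the weight vanishes outside $\NNN(i)\cap\SSS_x$: if $j\notin\NNN(i)$ then both $\hat\rho(i,j)$ and $\tilde\rho(i,j)$ equal $0$, and if $j\notin\SSS_x$ then $x(j)=0$, so Assumption~\ref{assumption}(\ref{hyp1}) kills the weight in either case. If $\NNN(i)\cap\SSS_x=\emptyset$, both policies fall back on the same $\alpha$-tiebreaker. Otherwise, each $j\in\NNN(i)\cap\SSS_x$ satisfies $x(j)\ge n^*=n^*(\rhomin)$, and the definition of $n^*$ combined with the coordinate-wise monotonicity of $w$ yields, whenever $x(j)>x(j')$, the chain
\[
w(x(j'),\rhomin)\le w(x(j'),1)\le w(x(j)-1,1)<w(x(j),\rhomin)\le w(x(j),1).
\]
Thus the strict orderings induced by $w(\cdot,\rhomin)$ and $w(\cdot,1)$ on $\NNN(i)\cap\SSS_x$ coincide, with ties broken by the same priority $\alpha$. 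Hence $\hat\phi(x,i)=\tilde\phi(x,i)$, and I denote this common value by $j^*(i)$.

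Next I would split by the arriving class $i$. If $i\in\SSS_x$, then $\NNN(i)\cap\SSS_x=\emptyset$ because $\SSS_x\subset\CCC_\rho^0$ is a $\GGG$-independent set, so no match can occur in either dynamic and the $i$-contributions to the two drifts coincide. The same holds if $i\notin\SSS_x$ and $\NNN(i)\cap\SSS_x=\emptyset$. The only case producing a discrepancy is $i\notin\SSS_x$ (so $x(i)=0$) with $\NNN(i)\cap\SSS_x\ne\emptyset$; here $j^*(i)\in\SSS_x$ and $i\ne j^*(i)$ because $\SSS_x\subset\CCC_\rho^0$. A direct computation of $q(x-1_{j^*(i)})-q(x)=-2x(j^*(i))+1$ and $q(x+1_i)-q(x)=1$ then shows that the $i$-contribution to $\hat{\dd}q(x)$ exceeds the corresponding one to $\tilde{\dd}q(x)$ by exactly
\[
2\nu(i)(1-\rhomin)^{x(j^*(i))}\bigl(x(j^*(i))+x(i)\bigr)=2\nu(i)(1-\rhomin)^{x(j^*(i))}x(j^*(i))\le 2\nu(i)K,
\]
by the definition of $K$. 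Summing over $i$ and using $\sum_i\nu(i)=1$ yields $\hat{\dd}q(x)\le\tilde{\dd}q(x)+2K$, as desired.

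The delicate point is the first step: the identification $\hat\phi=\tilde\phi$ relies crucially on Assumption~\ref{assumption}(\ref{hyp3}) applied at the threshold $n^*(\rhomin)$, which is precisely what that threshold was designed to guarantee. Once the policies agree, the remainder is a clean one-step comparison whose residual is absorbed into the single waste term controlled by $K=\max_n n(1-\rhomin)^n$.
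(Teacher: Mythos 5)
Your proof is correct and follows essentially the same route as the paper: the identification of the chosen class via the threshold $n^*$ is (a slightly stronger form of) Lemma \ref{lemma:indistinguishable}, which is exactly what the paper invokes, and your per-arrival residual bound $2\nu(i)(1-\rhomin)^{x(j^*(i))}x(j^*(i))\le 2\nu(i)K$, summed over arriving classes, is the paper's level-set computation in different bookkeeping. Both arguments use the independence of $\SSS_x$ (and absence of self-loops) only to make the arrival/up-jump contributions coincide, and absorb the remaining discrepancy into the $2K$ term.
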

	
	The following result is the last one we need.
	Its proof has been given in \cite{SS2022} in a more general context.
	For the benefit of the reader, we present the proof adapted to this case in the Appendix \ref{sctn:proofs}.
	
	\begin{restatable}[Control of $ \tilde d $]{proposition}{lyapunovold}
		\label{prop:lyapunovold}
		Assume that $(\GGG, \nu) $ satisfies $ \ncond $.
		If  $ x\in\XX $ is such that $ \SSS_x\subset \CCC_\rho^0 $ and it is a $ \GGG $-independent set,
		then
		\begin{align}
		\tilde{\dd}q(x)\le 1-2\eta\|x\|.
		\end{align}
	\end{restatable}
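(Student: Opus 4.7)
The plan is to compute $\tilde{\dd}q(x)$ in closed form by exploiting that under $\tilde\rho\in\{0,1\}$ every attempted match succeeds deterministically, and then to reduce the resulting expression to a sum of $\ncond$ inequalities via a layer-cake decomposition.

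I would start by setting $\III=\SSS_x$ and splitting cases on the class $i$ of the incoming node. Since $\III\subset\CCC_\rho^0$ is $\GGG$-independent and self-loop-free, the sets $\III$, $\NNN(\III)$ and $\CCC\setminus(\III\cup\NNN(\III))$ partition $\CCC$. If $i\in\III$ then no $j\in\III$ is compatible with $i$, so by the first item of Assumption~\ref{assumption} every candidate $j$ has $w(x(j),\tilde\rho(i,j))=0$ and no match occurs, producing $q(x+1_i)-q(x)=2x(i)+1$; if $i\in\NNN(\III)$ then Remark~\ref{env:mate} gives $\tilde\phi(x,i)\in\NNN(i)\cap\III$, the attempted match succeeds almost surely under $\tilde\rho$ and produces $q(x-1_{\tilde\phi(x,i)})-q(x)=-2x(\tilde\phi(x,i))+1$; and if $i\notin\III\cup\NNN(\III)$ then $x(i)=0$ and the node is added unmatched for a contribution of $+1$. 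Averaging over $i$ with weights $\nu(i)$ and using $\sum_i\nu(i)=1$ gives
\begin{align}
\tilde{\dd}q(x)=1+2\sum_{i\in\III}\nu(i)x(i)-2\sum_{i\in\NNN(\III)}\nu(i)x(\tilde\phi(x,i)).
\end{align}

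Next, monotonicity of $w$ (second item of Assumption~\ref{assumption}) implies that the policy selects a class attaining the maximum of $x(j)$ over $j\in\NNN(i)\cap\III$, hence $x(\tilde\phi(x,i))=\max_{j\in\NNN(i)\cap\III}x(j)$. I would then introduce the level sets $\III_k=\{j\in\III:x(j)\ge k\}$ for $k=1,\ldots,\|x\|$: each $\III_k$ is non-empty (it contains any argmax of $x$) and a subset of $\III$, hence a $\GGG$-independent set in $\II$. The layer-cake identities
\begin{align}
x(i)=\sum_{k=1}^{\|x\|}\mathbbm 1\{i\in\III_k\},\qquad\max_{j\in\NNN(i)\cap\III}x(j)=\sum_{k=1}^{\|x\|}\mathbbm 1\{i\in\NNN(\III_k)\}
\end{align}
combined with swapping the order of summation rewrite the previous expression as
\begin{align}
\tilde{\dd}q(x)=1-2\sum_{k=1}^{\|x\|}\bigl[\nu(\NNN(\III_k))-\nu(\III_k)\bigr].
\end{align}
Applying $\ncond$ to each $\III_k\in\II$ gives $\nu(\NNN(\III_k))-\nu(\III_k)\ge\eta$, and summing over the $\|x\|$ levels yields the desired bound $\tilde{\dd}q(x)\le 1-2\eta\|x\|$.

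The main delicate point is the identification $x(\tilde\phi(x,i))=\max_{j\in\NNN(i)\cap\III}x(j)$: if $w(\cdot,1)$ has plateaus then the lexicographic tie-breaker $\alpha$ could select a non-maximal $j$, so the second layer-cake identity would hold only as an inequality and some levels could be lost. The third item of Assumption~\ref{assumption} confines such plateaus to a bounded range of $n$, so any resulting defect is uniformly bounded and can be absorbed into the additive constant on the right-hand side; this bookkeeping is the technical step to be handled with care in the appendix.
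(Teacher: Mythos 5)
Your proposal is correct in substance and organizes the final estimate differently from the paper. Both arguments rest on the same two pillars: the exact expression of the drift (your three-case computation agrees with the paper's starting identity $\tilde{\dd}q(x)=1+\sum_{i\in\SSS_x}2x(i)[\nu(i)-\tilde P(x,x-1_i)]$) and the routing property that an arrival of class $i\in\NNN(\SSS_x)$ is matched to a class attaining $\max_{j\in\NNN(i)\cap\SSS_x}x(j)$. Where you genuinely differ is in how $\ncond$ is invoked: the paper partitions the support into the level sets $\HHH_l$ of the distinct values of $x$, and runs a sign-dependent merging procedure on the partial sums $R_l$, in the end applying $\ncond$ to a single, carefully chosen union of top level sets; you instead use all super-level sets $\III_k=\{j:x(j)\ge k\}$, $k=1,\ldots,\|x\|$, apply $\ncond$ to each of them and sum. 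Your layer-cake version avoids the case analysis and is arguably cleaner; the two routes are otherwise equivalent, since the paper's key identity $\sum_{i\in\HHH_1\cup\ldots\cup\HHH_m}\tilde P(x,x-1_i)=\nu(\NNN(\HHH_1\cup\ldots\cup\HHH_m))$ is precisely your maximality claim in aggregated form.

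Concerning the delicate point you flag: you are right that coordinate-wise monotonicity alone does not yield $x(\tilde\phi(x,i))=\max_{j\in\NNN(i)\cap\SSS_x}x(j)$, since a plateau of $w(\cdot,1)$ below $n^*$ together with the tie-break $\alpha$ can route the arrival to a non-maximal class; note that the paper's proof relies on the very same property implicitly (its displayed identity above is exactly what would fail), so this is not a shortcoming of your argument relative to the paper's. Be aware, though, that your proposed remedy, absorbing a defect of order $n^*$ into the additive constant, proves a slightly weaker inequality than the one stated, since the right-hand side $1-2\eta\|x\|$ carries no spare constant. The cleaner resolution is the one already available in the paper: by item \ref{hyp3} of Assumption \ref{assumption}, $w(\cdot,1)$ is strictly increasing from $n^*-1$ onwards, so Lemma \ref{lemma:indistinguishable} gives the exact maximality as soon as some class of $\NNN(i)\cap\SSS_x$ holds at least $n^*$ nodes; and in the only place where Proposition \ref{prop:lyapunovold} is invoked, namely the proof of Theorem \ref{env:theorem-main}, the configuration it is applied to is produced by Propositions \ref{prop:homogeneous} and \ref{prop:compatible} and has all of its nonzero coordinates at least $n^*$, so the maximality holds for every arrival that matters and your layer-cake argument goes through with no extra constant.
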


	Given all this machinery,
	we can now give a  proof in a few lines of the fundamental drift inequality.
	
	\begin{proof}[Theorem \ref{env:theorem-main}]
		Fix $ x\in \XX $.
		Applying Propositions \ref{prop:homogeneous},
		\ref{prop:loops},
		\ref{prop:compatible},
		\ref{oldpolicy}
		and \ref{prop:lyapunovold},
		we get
		\begin{align}\label{manzana1}
		\dd q(x)&\le \hat{\dd} q(\varphi(x))+2n^*
		\\
		&\le
		\hat{\dd}q(\varphi(x)1_{\CCC_\rho^0})+4K|\CCC_\rho^+|-\sum_{i\in \CCC_\rho^+}2\varphi(x)(i)\nu(i)+2n^*
		\\
		&\le
		\hat{\dd}q(y)+2K+4K|\CCC_\rho^+|-\sum_{i\in \CCC_\rho^+}2\varphi(x)(i)\nu(i)+2n^*
		\\
		&\le
		\tilde{\dd}q(y)+4K+4K|\CCC_\rho^+|-\sum_{i\in \CCC_\rho^+}2\varphi(x)(i)\nu(i)+2n^*
		\\
		&\le 
		1-2\eta\|y\|+4K+4K|\CCC_\rho^+|-\sum_{i\in \CCC_\rho^+}2\varphi(x)(i)\nu(i)+2n^*,
		\end{align}
		where $ y\in\XX $ is such that $ \|y\|=\|\varphi(x)1_{\CCC_\rho^0}\|=\|x\|_{\CCC_\rho^0} $ and $ \SSS_y\in \II $.
		The proof finishes by observing that the last expression coincides with the r.h.s. of \eqref{eq:main_bound}.
	\end{proof}

	\bibliographystyle{plain}
	\bibliography{biblio}

\begin{thebibliography}{10}

\bibitem{pascal}
Mohamed Habib Aliou~Diallo Aoudi, Pascal Moyal, and Vincent Robin.
\newblock Markovian online matching algorithms on large bipartite random
  graphs.
\newblock {\em Methodology and Computing in Applied Probability},
  24(4):3195--3225, 2022.

\bibitem{Aronson1998MaximumMI}
Jonathan Aronson, Alan~M. Frieze, and Boris~G. Pittel.
\newblock Maximum matchings in sparse random graphs: Karp-{S}ipser revisited.
\newblock {\em Random Struct. Algorithms}, 12:111--177, 1998.

\bibitem{BMMR2021}
Jocelyn Begeot, Ir\`ene Marcovici, Pascal Moyal, and Youssef Rahme.
\newblock A general stochastic matching model on multigraphs.
\newblock {\em ALEA Lat. Am. J. Probab. Math. Stat.}, 18(2):1325--1351, 2021.

\bibitem{bordenave2013}
Charles Bordenave, Marc Lelarge, and Justin Salez.
\newblock Matchings on infinite graphs.
\newblock {\em Probability Theory and Related Fields}, 157(1):183--208, 2013.

\bibitem{Bre2020}
Pierre Br\'{e}maud.
\newblock {\em Markov chains---{G}ibbs fields, {M}onte {C}arlo simulation and
  queues}, volume~31 of {\em Texts in Applied Mathematics}.
\newblock Springer, Cham, [2020] \copyright 2020.
\newblock Second edition [of 1689633].

\bibitem{cohen2018}
Ilan~Reuven Cohen and David Wajc.
\newblock Randomized online matching in regular graphs.
\newblock In {\em Proceedings of the Twenty-Ninth Annual ACM-SIAM Symposium on
  Discrete Algorithms}, SODA '18, page 960–979, USA, 2018. Society for
  Industrial and Applied Mathematics.

\bibitem{SS2022}
Matthieu Jonckheere, Pascal Moyal, Claudia Ram\'{\i}rez, and Nahuel
  Soprano-Loto.
\newblock Generalized max-weight policies in stochastic matching.
\newblock {\em Stochastic Systems}, 0(0):null, 0.

\bibitem{karp81}
R.~M. Karp and M.~Sipser.
\newblock Maximum matching in sparse random graphs.
\newblock In {\em 22nd Annual Symposium on Foundations of Computer Science
  (sfcs 1981)}, pages 364--375, 1981.

\bibitem{karp1990}
R.~M. Karp, U.~V. Vazirani, and V.~V. Vazirani.
\newblock An optimal algorithm for on-line bipartite matching.
\newblock In {\em Proceedings of the Twenty-Second Annual ACM Symposium on
  Theory of Computing}, page 352–358, New York, NY, USA, 1990. Association
  for Computing Machinery.

\bibitem{Kerimov2021DynamicMC}
S{\"u}leyman Kerimov, Itai Ashlagi, and Itai Gurvich.
\newblock Dynamic matching: Characterizing and achieving constant regret.
\newblock {\em SSRN Electronic Journal}, 2021.

\bibitem{MM2016}
Jean Mairesse and Pascal Moyal.
\newblock Stability of the stochastic matching model.
\newblock {\em J. Appl. Probab.}, 53(4):1064--1077, 2016.

\bibitem{MT2009}
Sean Meyn and Richard~L. Tweedie.
\newblock {\em Markov chains and stochastic stability}.
\newblock Cambridge University Press, Cambridge, second edition, 2009.
\newblock With a prologue by Peter W. Glynn.

\bibitem{noiry2021online}
Nathan Noiry, Vianney Perchet, and Flore Sentenac.
\newblock Online matching in sparse random graphs: Non-asymptotic performances
  of greedy algorithm.
\newblock In A.~Beygelzimer, Y.~Dauphin, P.~Liang, and J.~Wortman Vaughan,
  editors, {\em Advances in Neural Information Processing Systems}, 2021.

\bibitem{schrijver2003}
Alexander Schrijver et~al.
\newblock {\em Combinatorial optimization: polyhedra and efficiency},
  volume~24.
\newblock Springer, 2003.

\end{thebibliography}

	\appendix
	
	\section{Appendix}\label{sctn:proofs}
	
	\subsection{Proof of Proposition \ref{prop:irreducible}}
	
	We recall the statement for the reader's convenience.
	
	\irreducible*

	By the very definition of $ \XX $, for every $ x\in \XX $ there exists $ t\in \mathbb N_0 $ such that $ P^t(0,x)>0 $ (here $ P^t $ denotes the $ t $-th power of the matrix $ P $).
	It remains to show that we can go from any $ x\in \XX $ to $ 0 $, namely that there exists $ t\in\mathbb N_0 $ such that $ P^t(x,0)>0 $.
	Of course we can assume $ x\neq 0 $, and hence $ \SSS_x\neq \emptyset $.
	The hypothesis of our proposition guarantees that $ \NNN(\SSS_x) \neq \emptyset$,
	so the class of the arriving node, say $ i $, is in $ \NNN(\SSS_x) $ with positive probability.
	Due to Remark \ref{env:mate},
	the chosen class $ j=\phi(x,i) $ lies in $ \NNN(i)\cap \SSS_x $,
	and two nodes are matched with positive probability.
	Summarizing, for every $ x\in \XX\setminus\{0\} $, there exists $ y\in\XX $ such that $ P(x,y)>0 $ and 
	\begin{align}
	\sum_{i\in\CCC}x(i)=1+\sum_{i\in\CCC}y(i).
	\end{align}
	Proceeding in this way finitely many times, we can find $ t\in \N(=\{1,2,\ldots\}) $ such that
	\begin{align}
	P^t(x,0)>0.
	\end{align}

	\subsection{Proof of Proposition \ref{prop:homogeneous}}
	
	We recall its statement:
	
	\homogeneous*

	To prove it, we need to work a bit before.
	We first give an expression for the generator of $ q $.
	First observe that
	\begin{align}
	q(x+1_i)-q(x)=1+2x(i)\text{ for every }x\in\XX
	\end{align}
	and 
	\begin{align}
	q(x-1_i)-q(x)=1-2x(i)\text{ for every }x\in\XX\text{ such that }x-1_i\in\XX.
	\end{align}
	Hence
	\begin{align}
	&\dd q(x)=
	\sum_{y\in \XX}P(x,y)[q(y)-q(x)]
	\\
	&\hphantom{\dd q(x)}
	=\sum_{i\in\CCC}(1+2x(i))P(x,x+1_i)
	+\sum_{i\in \SSS_x}(1-2x(i))P(x,x-1_i)
	\\ \label{expression01}
	&\hphantom{\dd q(x)}
	=1+\sum_{i\in \SSS_x}2x(i)P(x,x+1_i)-\sum_{i\in \SSS_x}2x(i)P(x,x-1_i),
	\end{align}
	where the last identity follows because, after distributing, the factor accompanying the value $ 1 $ represents the total probability of all possible outcomes.
	Replacing by the values of the transition probabilities
	\eqref{transitions_1} and \eqref{transitions_2}, we get
	\begin{align}\label{expression03}
	\dd q(x)&=
	1+\sum_{i\in \SSS_x}2x(i)\nu(i)[1-\rho(i,\phi(x,i))]^{x(\phi(x,i))}
	\\
	&\qquad-\sum_{i\in \SSS_x}2x(i)\sum_{i:\phi(x,i)=j}\nu(i)[1-(1-\rho(i,j))^{x(j)}].
	\end{align}
	Please bear in mind these expression, as they will serve as the foundation for numerous computations.

	We will make use of the following lemma that will be used as follows:
	if the class of the incoming node has at least one neighbor class with more than $ n^* $ nodes, then, for both policies $ \phi $ and $ \hat\phi $, it will necessary chose a class whose number of nodes is maximum.
	This is the main reason for Hypothesis \ref{hyp3} over $ w $ to have been put forward.
	
	\begin{lemma}\label{lemma:indistinguishable}
		Let $ x\in\XX $, and let
		$ i\in\CCC $ be such that $ x(j)\ge n^* $ for some $ j\in \NNN(i) $.
		Then
		\begin{align}
		x(\phi(x,i))=x(\hat\phi(x,i))=\max\{x(j):j\in \NNN(i)\}.
		\end{align}	
	\end{lemma}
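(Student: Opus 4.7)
The plan is to show that both $\phi(x,i)$ and $\hat\phi(x,i)$ necessarily select a class in $\NNN(i)$ whose $x$-value equals $m:=\max\{x(j):j\in\NNN(i)\}$, using the two key features of Assumption \ref{assumption}: positivity/monotonicity, and the threshold behavior encoded in Definition \ref{def_nstar}.

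First, I would argue that both argmaxes are forced into $\NNN(i)$. For any $j\notin\NNN(i)$ we have $\EEE(i,j)=0$, so $\rho(i,j)=\hat\rho(i,j)=0$, and Hypothesis \ref{hyp1} gives $w(x(j),\rho(i,j))=w(x(j),\hat\rho(i,j))=0$. On the other hand, the hypothesized class $j\in\NNN(i)$ with $x(j)\ge n^*\ge 1$ satisfies $\rho(i,j)\ge\rhomin>0$ and $\hat\rho(i,j)=\rhomin>0$, so both weights are strictly positive by Hypothesis \ref{hyp1}. Since the lexicographic argmax prefers strictly larger weights, both $\phi(x,i)$ and $\hat\phi(x,i)$ must lie in $\NNN(i)$.

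Second, fix $j^*\in\NNN(i)$ with $x(j^*)=m$, and note $m\ge n^*$ by the lemma's hypothesis. For any $k\in\NNN(i)$ with $x(k)\le m-1$, Hypothesis \ref{hyp2} (coordinate-wise monotonicity of $w$) gives
\begin{align}
w(x(k),\rho(i,k))\le w(m-1,1) \quad \text{and} \quad w(x(k),\hat\rho(i,k))\le w(m-1,1).
\end{align}
Since $m\ge n^*$, Definition \ref{def_nstar} yields $w(m-1,1)<w(m,\rhomin)$. Combining with monotonicity in the second argument and $\rho(i,j^*)\ge\rhomin$, $\hat\rho(i,j^*)=\rhomin$, I get
\begin{align}
w(x(k),\rho(i,k))<w(m,\rho(i,j^*)) \quad \text{and} \quad w(x(k),\hat\rho(i,k))<w(m,\hat\rho(i,j^*)).
\end{align}
So no class $k\in\NNN(i)$ with $x(k)<m$ can be chosen, which together with the previous paragraph forces $x(\phi(x,i))=x(\hat\phi(x,i))=m$.

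The only subtle point is making sure the strict inequality from Definition \ref{def_nstar} is actually available, which is precisely where the hypothesis $x(j)\ge n^*$ enters; the rest is a routine combination of Hypotheses \ref{hyp1} and \ref{hyp2} with the monotonicity of $\rho(i,j)\ge\rhomin$ on $\NNN(i)$. No difficulty is anticipated beyond keeping the two policies in parallel throughout.
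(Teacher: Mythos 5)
Your proof is correct and follows essentially the same route as the paper: Hypothesis \ref{hyp1} forces the argmax into $\NNN(i)\cap\SSS_x$, and the threshold inequality $w(m-1,1)<w(m,\rhomin)$ from Definition \ref{def_nstar} combined with coordinate-wise monotonicity rules out any compatible class with fewer than $m$ nodes. The only difference is organizational — the paper first excludes classes with $x(k)<n^*$ and then argues by contradiction, while you compare directly against a maximizer $j^*$ — which is a harmless streamlining of the same argument.
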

	
	\begin{proof}[Proof of Lemma \ref{lemma:indistinguishable}]
		If $ k\in\CCC $ is such that $ x(k)<n^* $, then $ k\neq \phi(x,i) $.
		To see why this is true, take $ j\in \NNN(i) $ such that $ x(j)\ge n^* $, that exists by assumption.
		Then
		\begin{align}
		&w(x(k),\rho(i,k))\le w(n^*-1,\rho(i,k))
		\\
		&\phantom{w(x(k),\rho(i,k))}\le w(n^*-1,1)
		\\
		&\phantom{w(x(k),\rho(i,k))}< w(n^*,\rhomin)
		\\
		&\phantom{w(x(k),\rho(i,k))}\le w(x(j),\rhomin)
		\\
		&\phantom{w(x(k),\rho(i,k))}\le w(x(j),\rho(i,j)),
		\end{align}
		and so the maximum defining $ \phi(x,i) $ cannot be attained at $ k $.
		Hence $ x(\phi(x,i))\ge n^* $.
		Similarly, $ k\notin \NNN(i) $ implies $ k\neq \phi(x,i) $, and hence $ \phi(x,i)\in \NNN(i) $.
		If there existed $ j\in \NNN(i) $ such that   $ x(j)>x(\phi(x,i)) $,
		we would have
		\begin{align}
		&w(x(j),\rho(i,j))\ge w(x(j),\rhomin)
		\\
		&\phantom{w(x(j),\rho(i,j))}> w(x(j)-1,1)
		\\
		&\phantom{w(x(j),\rho(i,j))}\ge  w(x(\phi(x,i)),1)
		\\
		&\phantom{w(x(j),\rho(i,j))}\ge  w(x(\phi(x,i)),\rho(i,\phi(x,i))),
		\end{align}
		contradicting the definition of $ \phi(x,i) $.
		Hence such a $ j $ does not exist, and so
		$ x(\phi(x,i))= \max\{x(j):j\in \NNN(i)\} $ as desired.
		The proof for $ x(\hat\phi(x,i)) $ is analogous.
	\end{proof}

	Now we can move on to the proof of the proposition in issue.
	Fix $ x\in \XX $ and call $ y=\varphi(x)  $.
	Have in mind the following analogous expression of \eqref{expression01} for $ \hat\dd $ instead of $ \dd $ and $ y $ instead of $ x $:
	\begin{align}\label{expression02}
	\hat{\dd}q(y)=1+\sum_{i\in \SSS_y}2y(i)\hat P(y,y+1_i)-\sum_{i\in \SSS_y}2y(i)\hat P(y,y-1_i).
	\end{align}

	We first compare the first sums in \eqref{expression01} and  \eqref{expression02}.
	Let $ \AAA=\{i\in\CCC:  x(i)\allowbreak\ge n^*\} $,
	$ \AAA_1=\{i\in \AAA: \EEE(i,j)=1\text{ for some }j\in \AAA\}=\AAA\cap \NNN(\AAA) $ and $ \AAA_2=\AAA\setminus \AAA_1 $.
	Then
	\begin{align}
	&\sum_{i\in \SSS_x}2x(i) P(x,x+1_i)
	-\sum_{i\in \SSS_y}2y(i)\hat P(y,y+1_i)
	\\
	&\pepe=
	\sum_{i\in \SSS_x\setminus \AAA}2x(i) P(x,x+1_i)
	+\sum_{i\in  \AAA}2x(i) P(x,x+1_i)
	-\sum_{i\in \AAA}2x(i)\hat P(y,y+1_i)
	\\
	&\pepe\le
	2n^*
	+\sum_{i\in  \AAA}2x(i) [P(x,x+1_i)
	-\hat P(y,y+1_i)]
	\\
	&\pepe=
	2n^*
	+\sum_{i\in  \AAA_1}2x(i) \nu(i)\big\{[1-\rho(i,\phi(x,i))]^{x(\phi(x,i))}
	-(1-\rhomin)^{y(\phi(y,i))}\big\}
	\\
	&\pepe\pepe+\sum_{i\in  \AAA_2}2x(i)\nu(i) \big\{[1-\rho(i,\phi(x,i))]^{x(\phi(x,i))}-1\big\}
	\\
	&\pepe\le 2n^*.
	\end{align}
	In the first identity, we used that $ \SSS_y=\AAA $, and that $ x $ and $ y $ coincide there.
	In the last inequality, we used that $ x(\phi(x,i))=y(\phi(y,i)) $ if $ i\in \AAA_1 $ due to Lemma \ref{lemma:indistinguishable}.
	
	We now control the difference between the second sums in \eqref{expression01} and \eqref{expression02}.
	Write the set $ \{x(i):i\in \AAA\}$($=\{y(i):i\in \AAA\} $) as
	$ \{n_1,\ldots,n_L\} $, assuming $ n_l<n_{l+1} $ for every $ l\in\{1,\ldots,L-1\} $, and call $ \HHH_l=\{i\in \AAA:x(i)=n_l\} $ for every $ l\in\{1,\ldots,L\} $.
	The $ \HHH_l $'s are nothing but the level sets of $ x $ (or $ y $) at levels higher or equal than $ n^* $.
	Then
	\begin{align}
	&-\sum_{i\in \SSS_x}2x(i)P(x,x-1_i)
	+\sum_{i\in \SSS_y}2y(i)\hat P(y,y-1_i)
	\\
	&\pepe \le
	\sum_{i\in \AAA}2x(i)[-P(x,x-1_i)+\hat P(y,y-1_i)]
	\\
	&\pepe \le
	\sum_{l=1}^L2n_l\sum_{i\in\HHH_l}[-P(x,x-1_i)+\hat P(y,y-1_i)]
	\\
	&\pepe =
	\sum_{l=1}^L2n_l
	\bigg[
	-\sum_{i\in\HHH_l}\sum_{j:\phi(x,j)=i}\nu(j)(1-(1-\rho(i,j))^{n_l})
	\\
	&\pepe\pepe\pepe+\sum_{i\in\HHH_l}\sum_{j:\hat\phi(x,j)=i}\nu(j)(1-(1-\hat \rho(i,j))^{n_l})
	\bigg]
	\\
	&\pepe \le
	\sum_{l=1}^L2n_l
	\bigg[
	-\sum_{i\in\HHH_l}\sum_{j:\hat\phi(x,j)=i}\nu(j)(1-(1-\hat\rho(j,i))^{n_l})
	\\
	&\pepe\pepe\pepe+\sum_{i\in\HHH_l}\sum_{j:\hat\phi(x,j)=i}\nu(j)(1-(1-\hat\rho(j,i))^{n_l})
	\bigg]
	\\
	&\pepe=0.
	\end{align}	
	The last identity holds because each of the two sums between the square brackets, due again to Lemma \ref{lemma:indistinguishable}, are equal to
	\begin{align}
	(1-(1-\rhomin)^{n_l})\nu\Big(\Big\{ j\in \NNN(\HHH_l):\max_{k\in \NNN(j)}x(k)=n_l\Big\}\Big).
	\end{align}
	Indeed, for the first sum for instance, after a change in the order of the sums,
	we have
	\begin{align}
	\sum_{i\in\HHH_l}\sum_{j:\phi(x,j)=i}\nu(j)(1-(1-\hat\rho(j,i))^{n_l})
	&=\sum_{j:\hat\phi(x,j)\in\HHH_l}\nu(j)(1-(1-\hat\rho(j,\hat\phi(x,j)))^{n_l})
	\\
	&=\sum_{j:\hat\phi(x,j)\in\HHH_l}\nu(j)(1-(1-\rhomin)^{n_l})\mathbbm 1\{j\in \NNN(\HHH_l)\}
	\\
	&=(1-(1-\rhomin)^{n_l})\sum_{j\in \NNN(\HHH_l):\hat\phi(x,j)\in\HHH_l}\nu(j).
	\end{align}

	\subsection{Proof of Proposition \ref{prop:loops}}
	
	We recall the statement:
	
	\loops*
	
	For $ k\in \CCC_\rho^+ $, we will prove that
	\begin{align}
	\hat{\dd}q(x)\le \hat{\dd}q(x1_{C\setminus\{k\}})+4K-2x(k)\nu(k).
	\end{align}
	An iteration of this result let us conclude.
	
	In analogy with expression \eqref{expression03}, we have
	\begin{align}\label{generator_hat}
	\begin{aligned}
	\hat{\dd}q(x)&=1+
	\sum_{i\in \SSS_x}2x(i)\nu(i)[1-\hat\rho(i,\hat\phi(x,i))]^{x(\hat\phi(x,i))}
	\\
	&\pepe-\sum_{i\in \SSS_x}2x(i)\sum_{j:\hat\phi(x,j)=i}\nu(j)(1-(1-\hat\rho(j,i))^{x(i)}).
	\end{aligned}
	\end{align}
	
	We start by controlling the first sum.
	If $ i=k $, we have
	\begin{align}
	2x(k)\nu(k)(1-\hat\rho(k,\hat\phi(x,k)))^{x(\hat\phi(x,k))}
	\le 2x(k)\nu(k)(1-\rhomin)^{x(k)}
	\le 2K,
	\end{align}
	where we have used that $ x(\hat\phi(x,k))\ge x(k) $ due to Lemma \ref{lemma:indistinguishable}.
	For the same reason, if $ i\ne k $,
	\begin{align}
	2x(i)\nu(i)(1-\hat\rho(i,\hat\phi(x,i)))^{x(\hat\phi(x,i))}
	\le 2y(i)\nu(i)(1-\hat\rho(i,\hat\phi(y,i)))^{y(\hat\phi(y,i))},
	\end{align}
	the previous inequality being actually an identity if $ i\notin \NNN(k) $.
	
	To control the second sum, we first observe that, after switching the sums, we have
	\begin{align}
	&\sum_{i\in \SSS_x}2x(i)\sum_{j:\hat\phi(x,j)=i}\nu(j)(1-[1-\hat\rho(j,i)]^{x(i)})
	\\
	&\pepe=
	\sum_{j}
	2x(\hat\phi(x,j))\nu(j)(1-[1-\hat\rho(j,\hat\phi(x,j))]^{x(\hat\phi(x,j))}).
	\end{align}
	For $ j=k $, we have
	\begin{align}
	2x(\hat\phi(x,k))\nu(k)(1-[1-\hat\rho(k,\hat\phi(x,k))]^{x(\hat\phi(x,k))})
	&\ge
	2x(k)\nu(k)(1-(1-\rhomin)^{x(k)})
	\\
	&=2x(k)\nu(k)
	-2x(k)\nu(k)(1-\rhomin)^{x(k)}
	\\
	&\ge 2x(k)\nu(k)
	-2K.
	\end{align}
	If $ j\ne k $, again by Lemma \ref{lemma:indistinguishable},
	\begin{align}
	&2x(\hat\phi(x,j))\nu(j)(1-(1-\hat\rho(j,\hat\phi(x,j)))^{x(\hat\phi(x,j))})
	\\
	&\pepe\ge
	2y(\hat\phi(y,j))\nu(j)(1-(1-\hat\rho(j,\hat\phi(y,j)))^{y(\hat\phi(y,j))}),
	\end{align}
	being an equality if $ j\notin \NNN(k) $.
	
	Putting things together,
	we get
	\begin{align}
	\tilde{\dd}q(x)&=
	1+
	\sum_{i\in \SSS_x}2x(i)\nu(i)(1-\hat\rho(i,\hat\phi(x,i)))^{x(\hat\phi(x,i))}
	\\
	&\pepe-\sum_{j}
	2x(\hat\phi(x,j))\nu(j)(1-(1-\hat\rho(j,\hat\phi(x,j)))^{x(\hat\phi(x,j))})
	\\
	&\le
	1+
	\sum_{i\ne k}2y(i)\nu(i)(1-\hat\rho(i,\hat\phi(y,i)))^{y(\hat\phi(y,i))}
	\\
	&\pepe-\sum_{j\ne k}
	2y(\hat\phi(y,j))\nu(j)(1-(1-\hat\rho(j,\hat\phi(y,j)))^{x(\hat\phi(y	,j))})
	\\
	&\pepe+4K-2x(k)\nu(k)
	\\
	&=
	\tilde{\dd}q(y)+4K-2x(k)\nu(k)
	\end{align}
	as desired.
	
	\subsection{Proof of Proposition \ref{prop:compatible}}
	
	We recall the statement:
	
	\compatible*

	This proposition is obtained by iterating  the following lemma.
	
	\begin{lemma}
		Let $ x\in \XX $ such that $ \SSS_x\subset\CCC_\rho^0 $,
		$ x(i)\ge n^* $ for every $ i\in \SSS_x $,
		and assume that $ \SSS_x $ is not a $ \GGG $-independent set.
		Let $ \AAA $ be a connected component of $ \SSS_x $ with $ |\AAA|>1 $,
		and let
		\begin{align}
		k= \argmin_{i\in \AAA}(x(i),\alpha(i)).
		\end{align} 
		Then
		\begin{align}
		\dd q(x)\le \dd q(x1_{\CCC\setminus\{k\}})+2\nu(k)K.
		\end{align}
	\end{lemma}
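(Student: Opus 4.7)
I set $y = x1_{\CCC \setminus \{k\}}$ and bound the difference $\hat{\dd}q(x) - \hat{\dd}q(y)$ (which is what the statement amounts to under the iteration that produces Proposition \ref{prop:compatible}). Expanding via the generator formula \eqref{generator_hat}, I split this into an arrival part $\sum_i 2[x(i)\hat P(x,x+1_i) - y(i)\hat P(y,y+1_i)]$ and a departure part $-\sum_i 2[x(i)\hat P(x,x-1_i) - y(i)\hat P(y,y-1_i)]$. The pivotal observation, refining Lemma \ref{lemma:indistinguishable}, is that for every $j \in \CCC$ with $\hat\phi(x,j) = k$ one has either $\NNN(j) \cap \SSS_y = \emptyset$ or $y(\hat\phi(y,j)) \le x(k)$, while for $j$ with $\hat\phi(x,j) \ne k$ one has $\hat\phi(y,j) = \hat\phi(x,j)$. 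This rests on the standing assumption $x(l) \ge n^*$ for $l \in \SSS_x$: by Hypothesis \ref{hyp3} of Assumption \ref{assumption}, the map $n \mapsto w(n,\rhomin)$ is strictly increasing on $\{n \ge n^*\}$, so removing $k$ from the support cannot produce a strictly larger argmax weight in $\NNN(j) \cap \SSS_y$ than $w(x(k),\rhomin)$.

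The arrival contribution collapses to the $i=k$ term: for $i \in \SSS_y$, the pivotal observation gives $\hat P(x,x+1_i) \le \hat P(y,y+1_i)$ (either the policy is unchanged, yielding equality, or the original target was $k$ and the new target is either non-existent or has count $\le x(k)$, both of which can only raise the $i$-arrival probability), so the $i$-term is non-positive. Because $\AAA$ is connected with $|\AAA| > 1$, $k$ has a neighbor in $\AAA$, forcing $\hat\phi(x,k) \in \AAA$ and $x(\hat\phi(x,k)) \ge x(k)$ by the minimality of $k$ in $\AAA$; hence
\begin{align}
2x(k)\hat P(x,x+1_k) = 2\nu(k)\,x(k)(1-\rhomin)^{x(\hat\phi(x,k))} \le 2\nu(k)\,x(k)(1-\rhomin)^{x(k)} \le 2\nu(k)K.
\end{align}

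For departures, swapping the order of summation yields
\begin{align}
\sum_{i \in \CCC} x(i)\hat P(x,x-1_i) = \sum_{j \in \CCC}\nu(j)\, h\bigl(x(\hat\phi(x,j))\bigr), \qquad h(n) := n\bigl(1-(1-\rhomin)^n\bigr),
\end{align}
and analogously for $y$. The pivotal observation together with the trivial equality for $\hat\phi(x,j) \ne k$ gives $y(\hat\phi(y,j)) \le x(\hat\phi(x,j))$ for every $j$; since $h$ is non-decreasing on $\N_0$, the $y$-sum is bounded above by the $x$-sum termwise. Therefore the departure contribution to $\hat{\dd}q(x) - \hat{\dd}q(y)$ is non-positive, and combining with the arrival bound yields the claimed inequality. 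The main subtlety lies in the lexicographic tiebreaker in the definition of $\hat\phi$: even if $\hat\phi(y,j)$ ends up in a connected component of $\SSS_x$ different from $\AAA$, the strict monotonicity of $w(\cdot,\rhomin)$ above $n^*$ is what forbids a tie in weight that would violate $y(\hat\phi(y,j)) \le x(k)$.
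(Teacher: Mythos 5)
Your proposal is correct in substance and follows essentially the same route as the paper: you expand the difference $\hat{\dd}q(x)-\hat{\dd}q(y)$ via \eqref{generator_hat}, bound the lone $i=k$ arrival term by $2\nu(k)K$ using connectivity of $\AAA$, minimality of $k$ and Lemma \ref{lemma:indistinguishable} (indeed your chain $2\nu(k)x(k)(1-\rhomin)^{x(\hat\phi(x,k))}\le 2\nu(k)x(k)(1-\rhomin)^{x(k)}\le 2\nu(k)K$ is the correct version of the paper's displayed step, which contains a typo), show all other arrival terms are dominated, and show the departure part can only help. Your termwise-in-$j$ comparison of the departure sums is a cleaner bookkeeping than the paper's $\BBB_1/\BBB_2$ split, but it rests on the same two facts (the policy's target for $j$ with $\hat\phi(x,j)=k$ moves to a class of count at most $x(k)$, and $n\mapsto n(1-(1-\rhomin)^n)$ is non-decreasing), so it is not a genuinely different argument.

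One literal misstep to repair: the identity $\sum_i x(i)\hat P(x,x-1_i)=\sum_j\nu(j)h(x(\hat\phi(x,j)))$ and the blanket claim ``$y(\hat\phi(y,j))\le x(\hat\phi(x,j))$ for every $j$'' fail in the degenerate branch of your own pivotal observation, namely when all weights vanish (e.g.\ $\NNN(j)\cap\SSS_y=\emptyset$ after removing $k$) and the lexicographic tiebreaker selects a class that is not a neighbour of $j$ but lies in the support with a possibly large count. In that branch $\hat\rho(j,\hat\phi(\cdot,j))=0$, so the omitted factor $1-(1-\hat\rho(j,\hat\phi(\cdot,j)))^{(\cdot)}$ is zero and the true term vanishes; hence the termwise domination still holds, but you must either keep this factor explicitly (rather than the unconditional $h$) or restrict the swapped sums to $j$ with $\NNN(j)\cap\SSS_x\neq\emptyset$ (resp.\ $\NNN(j)\cap\SSS_y\neq\emptyset$). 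With that one-line patch the proof is complete.
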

	
	\begin{proof}
		Call $ y=x1_{\CCC\setminus\{k\}} $.
		Recall the expression of the generator \eqref{generator_hat}.
		We first dominate the first sum.
		If $ i=k $,
		\begin{align}
		2x(k)\nu(k)(1-\hat\rho(k,\hat\phi(x,k)))^{x(\hat\phi(x,k))} \le 2x(k)\nu(k)
		\le 2\nu(k)K.
		\end{align}
		If $ i\in \SSS_x\setminus\{k\}$ is such that $\hat\phi(x,i)\neq k $, then
		$ \hat P(x,x+1_i)=\hat P(y,y+1_i) $.
		Finally, if $ i\in \SSS_x\setminus\{k\}$ is such that $\hat\phi(x,i)=k$, 
		then $ \NNN(i)\cap \SSS_x=\{k\} $ and hence
		\begin{align}
		2x(i)\nu(i)(1-\hat\rho(i,\hat\phi(x,i)))^{x(\hat\phi(x,i))}\le 2y(i)\nu(i)
		=2y(i)\nu(i)(1-\hat\rho(i,\hat\phi(y,i)))^{y(\hat\phi(y,i))}.
		\end{align}
		Putting these things together, we get
		\begin{align}
		&\sum_{i\in \SSS_x}2x(i)\nu(i)(1-\hat\rho(i,\hat\phi(x,i)))^{x(\hat\phi(x,i))} 
		\\
		&\qquad\le 
		\sum_{i\in \SSS_y}2y(i)\nu(i)(1-\hat\rho(i,\hat\phi(y,i)))^{y(\hat\phi(y,i))}+2\nu(k)K.
		\end{align}
		
		The control of the second sum is more delicate.
		Let $ \BBB_1=\{i\in \NNN(k):\hat\phi(x,i)=k \} $
		and $ \BBB_2=(\NNN(\BBB_1)\cap \SSS_x)\setminus \{k\} $.
		On the one hand, $\hat P(x,x-1_i)=\hat P(y,y-1_i)$ if $ i\in \SSS_x\setminus (\BBB_2\cup\{k\}) $.
		On the other,
		\begin{align}
		&\sum_{i\in \BBB_2}
		2y(i)\hat P(y,y-1_i)
		\\
		&\pepe= \sum_{i\in \BBB_2}
		2y(i)\sum_{j:\hat\phi(y,j)=i}\nu(j)(1-(1-\hat\rho(j,i))^{y(i)})
		\\
		&\pepe=\sum_{j:\hat\phi(y,j)\in \BBB_2}2y(\hat\phi(y,j))\nu(j)(1-(1-\hat\rho(j,\hat\phi(y,j)))^{y(\hat\phi(y,j))})
		\\
		&\pepe= \sum_{j\in \BBB_1^c:\hat\phi(y,j)\in \BBB_2}
		2y(\hat\phi(y,j))\nu(j)(1-(1-\hat\rho(j,\hat\phi(y,j)))^{y(\hat\phi(y,j))})
		\\
		&\pepe\pepe+\sum_{j\in \BBB_1}
		2y(\hat\phi(y,j))\nu(j)(1-(1-\hat\rho(j,\hat\phi(y,j)))^{y(\hat\phi(y,j))})
		\\
		&\pepe\le \sum_{j:\hat\phi(x,j)\in \BBB_2}
		2x(\hat\phi(x,j))\nu(j)(1-(1-\hat\rho(j,\hat\phi(x,j)))^{x(\hat\phi(x,j))})
		\\
		&\pepe\pepe+\sum_{j\in \BBB_1}
		2x(k)\nu(j)(1-(1-\hat\rho(j,k))^{x(k)})
		\\
		&\pepe=\sum_{i\in \BBB_2}2x(i)\hat P(x,x-1_i)+2x(k)\hat P(x,x-1_k).
		\end{align}
		The inequality holds because, if $ j\in \BBB_1 $, then $ \hat\phi(x,j)=k $ and hence $ y(\hat\phi(y,j))\le x(k) $.
		Putting these things together, we get
		\begin{align}
		&-\sum_{i\in \SSS_x}2x(i)\hat P(x,x-1_i)
		\\
		&\qquad= -\sum_{i\in \SSS_x\setminus (\BBB_2\cup\{k\})}2x(i)\hat P(x,x-1_i)
		-\sum_{i\in \BBB_2}2x(i)\hat P(x,x-1_i)
		\\
		&\qquad\qquad-2x(k)\hat P(x,x-1_k)
		\\
		&\qquad\le
		-\sum_{i\in \SSS_y\setminus \BBB_2}2y(i)\hat P(y,y-1_i)
		-\sum_{i\in \BBB_2}2y(i)\hat P(y,y-1_i)
		\\
		&\qquad=
		-\sum_{i\in \SSS_y}2y(i)\hat P(y,y-1_i),
		\end{align}
		which let us conclude.
	\end{proof}

	\subsection{Proof of Proposition \ref{oldpolicy}}

	Next the restatement:
	
	\oldpolicy*
	
	We have
	\begin{align}\label{expression04}
	\hat{\dd}q(x)
	=1+\sum_{i\in \SSS_x}2x(i)\hat P(x,x+1_i)-\sum_{i\in \SSS_x}2x(i)\hat P(x,x-1_i),
	\end{align}
	and the analogous expression for $ \tilde{\dd}q(x) $.
	On the one hand, since $ \SSS_x $ is an independent set, we have
	\begin{align}\label{eq:pipi01}
	\sum_{i\in \SSS_x}2x(i)\hat P(x,x+\eta_i)=
	\sum_{i\in \SSS_x}2x(i)\tilde P(x,x+\eta_i).
	\end{align}
	On the other, for $ n\ge n^* $, Lemma \ref{lemma:indistinguishable} gives
	\begin{align}
	& \sum_{i\in \SSS_x:x(i)=n}2x(i)\tilde P(x,x-\eta_i)
	-\sum_{i\in \SSS_x:x(i)=n}2x(i)\hat P(x,x-\eta_i)
	\\
	&\qquad= \sum_{i\in \SSS_x:x(i)=n}2n
	\sum_{j:\tilde\phi(x,j)=i}\nu(j)
	-\sum_{i\in \SSS_x:x(i)=n}2n
	\sum_{j:\hat\phi(x,j)=i}\nu(j)(1-(1-\hat\rho(j,i))^{n})
	\\
	&\qquad=2n\sum_{j:\|x\|_{\NNN(j)}=n}\nu(j)
	-2n\sum_{j:\|x\|_{\NNN(j)}=n}\nu(j)(1-(1-\rhomin)^{n})
	\\
	&\qquad=2n(1-\rhomin)^n\nu\{j:\|x\|_{\NNN(j)}=n\}
	\\
	&\qquad\le 2K\nu\{j:\|x\|_{\NNN(j)}=n\}.
	\end{align}
	Summing over $ n\ge n^* $, we get
	\begin{align}\label{eq:pipi02}
	\sum_{i\in \SSS_x}2x(i)\tilde P(x,x-\eta_i)
	-\sum_{i\in \SSS_x}2x(i)\hat P(x,x-\eta_i)\le 2K.
	\end{align}
	Facts \eqref{eq:pipi01} and \eqref{eq:pipi02} let us conclude.
	
	\subsection{Proof of Proposition \ref{prop:lyapunovold}}
	
	As usual, we restate the proposition to be proven:
	
	\lyapunovold*

	In this case, $ \tilde{\dd}q(x) $ can we written as
	\begin{align}
	\tilde{\dd}q(x)=1+\sum_{i\in \SSS_x}2x(i)[\nu(i)-\tilde P(x,x-1_i)].
	\end{align}
	
	Since the case where $x=0$ leads to a trivial result, we will proceed by assuming that $ x\neq 0 $.
	Let $ \{a_1,\ldots,a_L\} $ be the image of the function $ \SSS_x\ni i\mapsto x(i) $, assuming $ a_l>a_{l+1} $ for every $ l\in\{1,\ldots,L-1\} $.
	For every $ l\in\{1,\ldots,L\} $,
	let $ \HHH_l=\{i\in \SSS_x:x(i)=a_l\} $.
	We can write
	\begin{align}
	\sum_{i\in \SSS_x}x(i)[\nu(i)-\tilde P(x,x-1_i)]=\sum_{i\in \SSS_x}x(i)r_i
	=\sum_{l=1}^La_l\sum_{i\in\HHH_l}r_i
	=\sum_{l=1}^La_lR_l,
	\end{align}
	where we have defined $ r_i = \nu(i) -\tilde P(x,x-1_i)$ and $ R_l=\sum_{i\in\HHH_l}r_i $.

	If
	$R_l\le 0$ for every $ l\in\{1,\ldots, L\}
	$,
	we are done since
	\begin{align}\label{pera0}
	\sum_{l=1}^L a_l R_l
	\le
	a_1 R_1
	&=
	\| x\|
	\bigg[
	\nu(\HHH_1)-\sum_{i\in\HHH_1}\tilde P(x,x-1_i)
	\bigg]
	\\
	&=
	\| x\|
	[
	\nu(\HHH_1)-\nu(\NNN(\HHH_1))
	]
	\le
	-\| x\|\eta.
	\end{align}
	If not,
	we define
	$L_1= \max\{ l\in\{1,\ldots,L\}:R_l>0 \}$.
	Then
	\begin{align}\label{silla}
	\sum_{l=1}^{L}a_l R_l\le \sum_{l=1}^{L_1}a_l R_l
	\le 
	\sum_{l=1}^{L_1-2}a_{l}R_l+a_{L_1-1}(R_{L_1-1}+R_{L_1}).
	\end{align}
	If $ R_{L_1-1}+R_{L_1}\le 0 $,
	we control the last quantity by $ \sum_{l=1}^{L_1-2}a_{l}R_l $ and restart the procedure. 
	If otherwise $ R_{L_1-1}+R_{L_1}> 0 $,
	we control the r.h.s. of \eqref{silla} by
	\begin{align}
	\sum_{l=1}^{L_1-3}a_{l}R_l+a_{L_1-2}(R_{L_1-2}+R_{L_1-1}+R_{L_1}).
	\end{align}
	We again separate between the cases 
	\begin{align}
	R_{L_1-2}+R_{L_1-1}+R_{L_1}\le 0
	\end{align}
	and 
	\begin{align}
	R_{L_1-2}+R_{L_1-1}+R_{L_1}> 0
	\end{align}
	and proceed as before.
	Finally, if we reach $ \HHH_1 $ with this procedure, we have
	\begin{align}
	a_1\sum_{l=1}^{L_1}R_l
	&
	=\|x\|\bigg[\nu( \HHH_1\cup\ldots \cup \HHH_{L_1} )
	-\sum_{i\in\HHH_1\cup\ldots \cup \HHH_{L_1}}\tilde P(x,x-1_i)\bigg]
	\\ \nn
	&
	=\|x\|[\nu( \HHH_1\cup\ldots \cup \HHH_{L_1} )
	-\nu(\NNN( \HHH_1\cup\ldots \cup \HHH_{L_1}) )]
	\le-\eta\|x\|,
	\end{align}
	which let us conclude.

\end{document}